\theoremstyle{plain}
\newtheorem{theorem}{Theorem}[section]
\newtheorem{lemma}[theorem]{Lemma}
\theoremstyle{definition}
\theoremstyle{remark}
\newtheorem{remark}[theorem]{Remark}
\numberwithin{equation}{section}
\begin{document}

\title{\textbf{An operator-theoretical treatment \\of the Maskawa-Nakajima equation \\in the massless abelian gluon model}}

\author{Shuji Watanabe\\
Division of Mathematical Sciences\\
Graduate School of Engineering, Gunma University\\
4-2 Aramaki-machi, Maebashi 371-8510, Japan\\
Email: shuwatanabe@gunma-u.ac.jp}

\date{}

\maketitle

\begin{abstract}
The Maskawa-Nakajima equation has attracted considerable interest in elementary particle physics. From the viewpoint of operator theory, we study the Maskawa-Nakajima equation in the massless abelian gluon model. We first show that there is a nonzero solution to the Maskawa-Nakajima equation when the parameter $\lambda$ satisfies $\lambda>2$. Moreover, we show that the solution is infinitely differentiable and strictly decreasing. We thus conclude that the massless abelian gluon model generates the nonzero quark mass spontaneously and exhibits the spontaneous chiral symmetry breaking when $\lambda>2$. We next show that there is a unique solution $0$ to the Maskawa-Nakajima equation when $0<\lambda<1$, from which we conclude that each quark remains massless and that the model realizes the chiral symmetry when $0<\lambda<1$.

\medskip

\noindent Mathematics Subject Classification 2010. \    45P05, 47H10, 81T13.

\medskip

\noindent Keywords. \   Fixed-point theorem, nonlinear integral equation, spontaneous chiral symmetry breaking, Maskawa-Nakajima equation, massless abelian gluon model.
\end{abstract}


\section{Introduction and preliminaries}

The Maskawa-Nakajima equation has attracted considerable interest in elementary particle physics. The equation is applied to many models such as a massive abelian gluon model \cite{maskawa-nakajima-one, maskawa-nakajima-two, kondo, kugo-nakajima}, a massless abelian gluon model \cite{fukuda-kugo}, a QCD (quantum chromodynamics)-like model \cite{higashijima, kondo-two}, a technicolor model \cite{yamawaki} and a top quark condensation model \cite{miransky-tanabashi-yamawaki}. See also \cite{jackiwjohnson, cornwallnorton} for a technicolor model. The reason why Professor Maskawa reconsider the spontaneous chiral symmetry breaking in a renormalizable model of strong interaction and so on is mentioned in his Nobel lecture.

The Maskawa-Nakajima equation in the massless abelian gluon model is the following nonlinear integral equation \cite{maskawa-nakajima-one, maskawa-nakajima-two, fukuda-kugo, kugo-nakajima} :
\begin{equation}\label{eq:mnequation}
u(x)=\frac{\,\lambda\,}{2}\int_{\varepsilon}^{\Lambda}
\frac{1}{\, y+x+|y-x| \,}\,
\frac{y\, u(y)}{\, y+u(y)^2\,}\, dy,
\qquad \varepsilon \leq x \leq \Lambda,
\end{equation}
where $\varepsilon>0$ is the infrared cutoff, $\Lambda>0$ is the ultraviolet cutoff and $\varepsilon<\Lambda$. Here, $\displaystyle{ \lambda=\frac{3a^2}{\, 4\pi^2\,}>0 }$ is a parameter with $a > 0$ called the gauge coupling constant. When $\lambda>2$, we assume that $\varepsilon$ and $\Lambda$ satisfy \eqref{eq:varepsilonlambda} below.

The adopted gauge group of the massless abelian gluon model is $U(1)$, an abelian group, and the model describes the gauge interactions among quarks and massless gluons. The solution $u$ to the Maskawa-Nakajima equation \eqref{eq:mnequation} is the mass function of the quark, and it is known that the quark mass breaks the symmetry called the chiral symmetry of the model. So, if there is a nonzero solution to the Maskawa-Nakajima equation \eqref{eq:mnequation}, then the nonzero quark mass is generated spontaneously and the massless abelian gluon model is said to exhibit the spontaneous chiral symmetry breaking; if there is a unique solution $0$ to the Maskawa-Nakajima equation \eqref{eq:mnequation}, then each quark remains massless and the massless abelian gluon model is said to realize the chiral symmetry. Maskawa and Nakajima \cite{maskawa-nakajima-one, maskawa-nakajima-two} showed the following: If $\lambda >0$ is large (strong coupling), then the nonzero quark mass is generated spontaneously because of strong coupling, and hence, the chiral symmetry breaks spontaneously. On the other hand, if $\lambda >0$ is small (weak coupling), then the quark mass is identically zero, and hence, the chiral symmetry is preserved.

The present author thinks that the Maskawa-Nakajima equation plays a role similar to that of the BCS gap equation in the BCS model \cite{bcs, bogoliubov} for superconductivity. If there is a nonzero solution to the BCS gap equation, then the BCS model exhibits the spontaneous breaking of the $U(1)$ symmetry; if there is a unique solution $0$ to the BCS gap equation, then the BCS model realizes the $U(1)$ symmetry. The existence and uniqueness of the solution to the gap equation as well as its properties are studied in \cite{bls, odeh, billardfano, vansevesant, fhns, hhss, haizlseiringer, watanabe, watanabe-two}.

Let $\displaystyle{ w \in C[\varepsilon,\, \Lambda] }$ be
\begin{equation}\label{eq:functionw}
w(x)=\frac{\, 4\varepsilon\,}{\lambda}\sqrt\frac{\varepsilon}{\,\Lambda x\,}\, ,\qquad \varepsilon \leq x \leq \Lambda.
\end{equation}

We consider the following subset of the Banach space $C[\varepsilon,\, \Lambda]$:
\begin{equation}\label{eq:setv}
V=\left\{ u \in C[\varepsilon,\, \Lambda]: \; w(x) \leq u(x) \leq \frac{\,\lambda\,}{4}\sqrt{\Lambda} \;\; \mbox{at} \;\; \mbox{all} \;\; x \in [\varepsilon,\, \Lambda] \right\}.
\end{equation}

For $u \in V$, we define a nonlinear integral operator $A$ by
\begin{equation}\label{eq:operatora}
Au(x)=\frac{\,\lambda\,}{2}\int_{\varepsilon}^{\Lambda}
\frac{1}{\, y+x+|y-x| \,}\,
\frac{y\, u(y)}{\, y+u(y)^2\,}\, dy, \qquad \varepsilon \leq x \leq \Lambda.
\end{equation}
Then $Au(x)$ is well-defined at every $x \in [\varepsilon,\, \Lambda]$. Note that $Au(x)$ coincides with the right side of \eqref{eq:mnequation}. So we look for a fixed point of the nonlinear integral operator $A$. Note also that $Au(x)$ is rewritten as
\begin{equation}\label{eq:operatoraa}
Au(x)=\frac{\,\lambda\,}{4} \left\{
\frac{1}{\, x\,} \int_{\varepsilon}^x \frac{y\, u(y)}{\, y+u(y)^2\,}\, dy
+\int_x^{\Lambda} \frac{u(y)}{\, y+u(y)^2\,}\, dy \right\},
\qquad \varepsilon \leq x \leq \Lambda.
\end{equation}

Assume that the function $u$ is a fixed point of the operator $A$. Since $u \in V$ is continuous on $[\varepsilon,\, \Lambda]$, it follows that $u$ is smooth on $[\varepsilon,\, \Lambda]$. So, by \eqref{eq:operatoraa},
\begin{equation}\label{eq:de-one}
x^2 u''(x)+2x u'(x)+\frac{\,\lambda\,}{4}
\frac{xu(x)}{\, x+u(x)^2\,}=0, \qquad \varepsilon \leq x \leq \Lambda.
\end{equation}
Assume also that $u(x)>0$ is small enough at $x$ large enough. It then follows from \eqref{eq:de-one} that at $x$ large enough,
\begin{equation}\label{eq:de-two}
x^2 u''(x)+2x u'(x)+\frac{\,\lambda\,}{4}u(x)=0.
\end{equation}
See Kugo and Nakajima \cite{kugo-nakajima}. Studying the solution to the ordinary differential equation \eqref{eq:de-two} leads to the conclusions that the massless abelian gluon model exhibits the spontaneous chiral symmetry breaking for $\lambda>1$ and that the model realizes the chiral symmetry for $0<\lambda<1$ (see \cite{fukuda-kugo, kugo-nakajima}). Note that \eqref{eq:de-two} is obtained under the assumption that $u(x)>0$ is small enough at $x$ large enough. But one does not know whether or not $u(x)>0$ is small enough at $x$ large enough. It may happen that $u(x)$ is very large at $x$ large enough. So we address this problem from the viewpoint of operator theory without the assumption that $u(x)>0$ is small enough at $x$ large enough.

On the other hand, replacing $u(x)$ of \eqref{eq:mnequation} by $\displaystyle{ u(x)=\frac{\psi(x)}{\, \sqrt{x+\varepsilon} \,} }$ and letting $\Lambda$ tend to infinity change \eqref{eq:mnequation} into
\begin{equation}\label{eq:mnequationprime}
\psi(x)=\frac{\,\lambda\,}{2} \sqrt{x+\varepsilon}
\int_{\varepsilon}^{\infty} \frac{1}{\, y+x+|y-x| \,}\,
\frac{y}{\, \sqrt{y+\varepsilon}\,} \,
\frac{\psi(y)}{\, y+
\displaystyle{ \frac{\psi(y)^2}{\, y+\varepsilon \,} }\,}\, dy,
\qquad \varepsilon \leq x < \infty.
\end{equation}

\begin{remark}\label{rmk:interest}
Because of our mathematical interest, we let $\Lambda \to \infty$ in \eqref{eq:mnequationprime}, and so we consider the Banach space $B^0[\varepsilon,\, \infty)$ in \eqref{eq:setw} below. We can similarly deal with the case where $\Lambda$ remains finite. See Remark \ref{rmk:w1} below.
\end{remark}

We consider the Banach space $B^0[\varepsilon,\, \infty)$ consisting of all the bounded and continuous functions on $[\varepsilon,\, \infty)$, and deal with the following subset:
\begin{equation}\label{eq:setw}
W=\left\{ \psi\in B^0[\varepsilon,\, \infty): \; \psi(x) \geq 0 \;\; \mbox{at} \;\; \mbox{all} \;\; x \in [\varepsilon,\,\infty) \right\}.
\end{equation}

For $\psi \in W$, we define another nonlinear integral operator $B$ by
\begin{equation}\label{eq:operatorb}
B\psi(x)=\frac{\,\lambda\,}{2} \sqrt{x+\varepsilon}
\int_{\varepsilon}^{\infty} \frac{1}{\, y+x+|y-x| \,}\,
\frac{y}{\, \sqrt{y+\varepsilon}\,} \,
\frac{\psi(y)}{\, y+
\displaystyle{ \frac{\psi(y)^2}{\, y+\varepsilon \,} }\,}\, dy,
\qquad \varepsilon \leq x < \infty.
\end{equation}
Then $B\psi(x)$ is well-defined at every $x \in [\varepsilon,\,\infty)$.  Note that $B\psi(x)$ coincides with the right side of \eqref{eq:mnequationprime}. So we again look for a fixed point of the nonlinear integral operator $B$.

The paper proceeds as follows. In section 2 we state our main results without proof. In sections 3 and 4 we prove our main results.

\section{Main results}

We first deal with the case where $\lambda>2$. For $\lambda>2$, let $\varepsilon$ and $\Lambda$ satisfy
\begin{equation}\label{eq:varepsilonlambda}
\frac{\varepsilon}{\,\Lambda\,} \leq \min\left( \frac{1}{\, 16\,},\,\left(
\frac{\, \sqrt{\lambda^2+128(\lambda-2)}-\lambda\,}{64} \right)^2 \right).
\end{equation}

\begin{theorem}\label{thm:fxd-pt-A}
Assume $\lambda>2$. Let $\varepsilon$ and $\Lambda$ satisfy \eqref{eq:varepsilonlambda}. Let $A$ be as in \eqref{eq:operatora} and $V$ as in \eqref{eq:setv}.

\noindent {\rm (a)} \   The nonlinear integral operator $A: V \to V$ has at least one fixed point $u_0 \in V$. Consequently, $u_0$ is continuous on $[\varepsilon,\, \Lambda]$ and $(0<)\, w(x) \leq u_0(x) \leq \lambda\sqrt{\Lambda}/4$ at all $x \in [\varepsilon,\,\Lambda]$. Hence the massless abelian gluon model 
generates the nonzero quark mass spontaneously, and so the model exhibits the spontaneous chiral symmetry breaking. Moreover, each fixed point $u_0 \in V$ is strictly decreasing on $[\varepsilon,\, \Lambda]$, and satisfies
\[
u_0 \in C^{\infty}[\varepsilon,\, \Lambda], \qquad u_0'(\varepsilon)=0.
\]

\noindent {\rm (b)} \   Let $u_0  \in V$ be a fixed point of the nonlinear integral operator $A: V \to V$ above. If $u_0(x) \leq \sqrt{x}$ at all $x \in [\varepsilon,\, \Lambda]$, then $A: V \to V$ has a unique fixed point $u_0 \in V$.
\end{theorem}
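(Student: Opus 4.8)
The plan is to derive part~(a) from Schauder's fixed-point theorem applied to $A$ on the set $V$ of \eqref{eq:setv}. I would first note that $V$ is a nonempty, bounded, closed and convex subset of $C[\varepsilon,\Lambda]$, convexity and closedness being immediate from the pointwise two-sided bound defining it. The substantive point is that $A$ maps $V$ into itself. For the upper bound, writing $A$ in the split form \eqref{eq:operatoraa} and using $y+u(y)^2\ge 2\sqrt y\,u(y)$ (AM--GM) gives $\frac{y\,u(y)}{y+u(y)^2}\le\tfrac12\sqrt y$ and $\frac{u(y)}{y+u(y)^2}\le\tfrac1{2\sqrt y}$, whence a short computation yields $Au(x)\le\frac\lambda4\bigl(\sqrt\Lambda-\tfrac23\sqrt x\bigr)\le\frac\lambda4\sqrt\Lambda$. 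The lower bound $Au(x)\ge w(x)$ is more delicate: here one inserts $w(y)\le u(y)\le\frac\lambda4\sqrt\Lambda$ into \eqref{eq:operatoraa}, bounds $y+u(y)^2$ above by a constant multiple of $\Lambda$, and checks that the resulting lower bound still dominates $w(x)$. This is precisely the step that forces the hypothesis \eqref{eq:varepsilonlambda}, and I expect it to be the main obstacle in~(a).

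Continuity of $A\colon V\to V$ is routine, and relative compactness of $A(V)$ follows from the Arzel\`a--Ascoli theorem once equicontinuity is read off from the explicit kernel in \eqref{eq:operatoraa}, the integrands being uniformly bounded on $V$. Schauder then delivers a fixed point $u_0\in V$ with the stated bounds. For the qualitative properties I would differentiate \eqref{eq:operatoraa}: the two boundary terms produced by differentiating $\frac1x\int_\varepsilon^x$ and $\int_x^\Lambda$ cancel, leaving $(Au_0)'(x)=-\frac\lambda{4x^2}\int_\varepsilon^x\frac{y\,u_0(y)}{y+u_0(y)^2}\,dy$. Since $u_0=Au_0$ and the integrand is positive, this is strictly negative on $(\varepsilon,\Lambda]$ (strict monotonicity) and vanishes at $x=\varepsilon$ (so $u_0'(\varepsilon)=0$). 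Smoothness follows by bootstrapping: $u_0$ continuous makes the integrand continuous, so $u_0=Au_0\in C^1$; feeding this back through the smooth map $t\mapsto t/(y+t^2)$, legitimate since $y\ge\varepsilon>0$, raises the regularity one order at a time, giving $u_0\in C^\infty[\varepsilon,\Lambda]$.

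For the conditional uniqueness in part~(b), I would compare the given fixed point $u_0$ (satisfying $u_0(x)\le\sqrt x$) with an arbitrary fixed point $v\in V$. Starting from the identity $\frac{u_0}{y+u_0^2}-\frac{v}{y+v^2}=\frac{(u_0-v)(y-u_0v)}{(y+u_0^2)(y+v^2)}$, a case analysis on the sign of $y-u_0v$ — bounding the denominator below by $y^2$ when $u_0v\le y$, and by $4y\,u_0v$ via AM--GM when $u_0v>y$ — shows $\bigl|\frac{u_0}{y+u_0^2}-\frac{v}{y+v^2}\bigr|\le\frac{|u_0(y)-v(y)|}{y}$, using the hypothesis $u_0(y)\le\sqrt y$ but requiring no bound on $v$. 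Substituting into \eqref{eq:operatoraa} then shows that $\phi:=|u_0-v|\ge0$ satisfies the linear integral inequality $\phi(x)\le\frac\lambda4\bigl(\frac1x\int_\varepsilon^x\phi(y)\,dy+\int_x^\Lambda\frac{\phi(y)}{y}\,dy\bigr)$, and it remains to deduce $\phi\equiv0$.

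This last implication is the main obstacle of~(b). A crude sup-norm estimate only yields the factor $\frac\lambda4\ln\frac\Lambda\varepsilon$, which need not lie below $1$, so the argument must be sharpened. I would refine the pointwise Lipschitz factor by retaining the denominators $(y+u_0^2)(y+v^2)$ and using the full two-sided bounds $w(y)\le u_0(y),v(y)\le\frac\lambda4\sqrt\Lambda$ defining $V$ — noting in particular that the factor decays when $v$ is large — and then estimate the resulting positive linear operator in a weighted sup-norm adapted to the profile $w$. The quantitative threshold at which the contraction constant falls below $1$ is what the second bound in \eqref{eq:varepsilonlambda} is tailored to furnish; carrying this estimate out is the crux. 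I would also record the structural fact that on the regime $u\le\sqrt x$ the nonlinearity $t\mapsto t/(y+t^2)$ is increasing and concave, its second derivative equalling $\frac{2t(t^2-3y)}{(y+t^2)^3}<0$ there, so that $A$ is a monotone concave operator and a Krasnoselskii-type theorem offers an alternative, order-theoretic route to uniqueness.
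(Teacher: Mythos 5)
Your part (a) is sound except at the one step you yourself flag as the main obstacle, and there the method you propose provably fails. Replacing $u$ by $w$ in the numerator of \eqref{eq:operatoraa} while bounding the denominator $y+u(y)^2$ above by a constant multiple of $\Lambda$ loses a factor of order $\Lambda/y$, and no choice of cutoffs recovers it: with $w(y)=\tfrac{4\varepsilon^{3/2}}{\lambda\sqrt\Lambda}y^{-1/2}$ and $y+u(y)^2\le(1+\lambda^2/16)\Lambda$, evaluation at $x=\Lambda$ gives at best $Au(\Lambda)\ge\tfrac{\lambda}{6(1+\lambda^2/16)}\,w(\Lambda)$, and $\sup_{\lambda>0}\tfrac{\lambda}{6(1+\lambda^2/16)}=\tfrac13<1$ (attained at $\lambda=4$); the inequality $Au(\Lambda)\ge w(\Lambda)$ is therefore unreachable by this route for \emph{every} $\lambda$, however small $\varepsilon/\Lambda$ is. What the paper does instead is split the step into two lemmas, neither of which appears in your sketch. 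First, a comparison lemma: for every $u\in V$ one has $u(y)w(y)\le y$ (this uses the precise form \eqref{eq:functionw} of $w$ together with the upper bound $\lambda\sqrt\Lambda/4$ in \eqref{eq:setv}), and whenever $a\ge b>0$ with $ab\le y$ one has $\tfrac{a}{y+a^2}\ge\tfrac{b}{y+b^2}$; hence $\tfrac{u(y)}{y+u(y)^2}\ge\tfrac{w(y)}{y+w(y)^2}$ pointwise and $Au\ge Aw$. Second, a sharp computation of $Aw$ in which the denominator is kept of order $y$, namely $y+w(y)^2\le(1+16\varepsilon/\Lambda)\,y$, which yields $Aw(x)>\tfrac{\lambda(1-\sqrt{\varepsilon/\Lambda})}{2(1+16\varepsilon/\Lambda)}\,w(x)$; it is exactly here that $\lambda>2$ and \eqref{eq:varepsilonlambda} are used to conclude $Aw>w$, hence $Au\ge Aw> w$. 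The remainder of your part (a) — the AM--GM upper bound, Arzel\`a--Ascoli, Schauder, the cancellation of boundary terms giving $u_0'(x)=-\tfrac{\lambda}{4x^2}\int_\varepsilon^x\tfrac{y\,u_0(y)}{y+u_0(y)^2}\,dy$, and the $C^\infty$ bootstrap — matches the paper and is fine.

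Part (b) you leave genuinely open, and the completion you outline cannot work. Your Lipschitz bound $\bigl|\tfrac{yX}{y+X^2}-\tfrac{yY}{y+Y^2}\bigr|\le|X-Y|$ is correct, but the resulting linear majorant $T\phi(x)=\tfrac\lambda4\bigl(\tfrac1x\int_\varepsilon^x\phi(y)\,dy+\int_x^\Lambda\tfrac{\phi(y)}{y}\,dy\bigr)$ is not contractive in \emph{any} reasonable weighted sup-norm when $\lambda>2$: for power weights $y^{-\alpha}$ the best constant is $\tfrac\lambda4\bigl(\tfrac1{1-\alpha}+\tfrac1\alpha\bigr)\ge\lambda$, with the minimum at $\alpha=\tfrac12$ — i.e.\ precisely at the weight "adapted to the profile $w$" you suggest, $T$ acts as multiplication by roughly $\lambda>2$. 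Weighted contraction is intrinsically a weak-coupling tool (it is how the paper proves Theorem \ref{thm:fxd-pt-B} for $0<\lambda<1$), and your reading of \eqref{eq:varepsilonlambda} as a contraction threshold is mistaken: that condition only forces $\varepsilon/\Lambda$ to be small and is consumed entirely by the invariance $AV\subset V$. The viable argument is the one you relegate to a closing aside, and it is the paper's actual proof: given a second fixed point $v_0\in V$, choose $t\in(0,1)$ and $x_0$ with $u_0\ge t\,v_0$ on $[\varepsilon,\Lambda]$ and $u_0(x_0)=t\,v_0(x_0)$ (exchanging the roles of $u_0$ and $v_0$ if $u_0\ge v_0$ everywhere). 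Since $t\,v_0\le u_0\le\sqrt y$, one has $u_0(y)\cdot t\,v_0(y)\le y$, so the same comparison principle as in (a) gives $\tfrac{u_0(y)}{y+u_0(y)^2}\ge\tfrac{t\,v_0(y)}{y+t^2v_0(y)^2}$, and sublinearity gives the strict inequality $\tfrac{t\,v_0(y)}{y+t^2v_0(y)^2}>t\,\tfrac{v_0(y)}{y+v_0(y)^2}$ because $t<1$ and $v_0\ge w>0$. Applying $A$ at $x_0$ then yields $u_0(x_0)>t\,Av_0(x_0)=t\,v_0(x_0)$, contradicting $u_0(x_0)=t\,v_0(x_0)$. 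Note that your pointwise Lipschitz estimate discards exactly this sublinear structure of $t\mapsto t/(y+t^2)$, which is why the linearized approach dead-ends while the order-theoretic one closes in a few lines.
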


We then deal with the case where $0<\lambda<1$.

\begin{theorem}\label{thm:fxd-pt-B}
Assume $0<\lambda<1$. Let $B$ be as in \eqref{eq:operatorb} and $W$ as in \eqref{eq:setw}. Then the nonlinear integral operator $B: W \to W$ has a unique fixed point $0 \in W$. Consequently, each quark remains massless and the massless abelian gluon model realizes the chiral symmetry.
\end{theorem}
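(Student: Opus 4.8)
The plan is to establish the single sharp growth estimate $\|B\psi\|_\infty \le \lambda\,\|\psi\|_\infty$ for every $\psi\in W$, where $\|\cdot\|_\infty$ denotes the norm of $B^0[\varepsilon,\,\infty)$. This one inequality settles the whole theorem: $W$ is a closed subset of the Banach space $B^0[\varepsilon,\,\infty)$, and $B0=0$ holds trivially since the integrand in \eqref{eq:operatorb} vanishes when $\psi\equiv0$, so $0\in W$ is a fixed point; and if $\psi\in W$ is any fixed point, then $\|\psi\|_\infty=\|B\psi\|_\infty\le\lambda\|\psi\|_\infty$, whence $(1-\lambda)\|\psi\|_\infty\le0$ forces $\psi\equiv0$ because $\lambda<1$. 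Thus the work is entirely in producing the constant $\lambda$.

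First I would rewrite the kernel using $y+x+|y-x|=2\max(x,y)$, so that
\[
B\psi(x)=\frac{\,\lambda\,}{4}\sqrt{x+\varepsilon}\int_\varepsilon^\infty\frac{y}{\,\max(x,y)\sqrt{y+\varepsilon}\,}\,\frac{\psi(y)}{\,y+\psi(y)^2/(y+\varepsilon)\,}\,dy .
\]
Since the denominator is at least $y$, the elementary pointwise bound $\dfrac{\psi(y)}{\,y+\psi(y)^2/(y+\varepsilon)\,}\le\dfrac{\psi(y)}{y}\le\dfrac{\|\psi\|_\infty}{y}$ gives $0\le B\psi(x)\le\frac{\lambda}{4}\|\psi\|_\infty\,I(x)$, where
\[
I(x)=\sqrt{x+\varepsilon}\int_\varepsilon^\infty\frac{dy}{\,\max(x,y)\sqrt{y+\varepsilon}\,}=\sqrt{x+\varepsilon}\left[\frac1x\int_\varepsilon^x\frac{dy}{\sqrt{y+\varepsilon}}+\int_x^\infty\frac{dy}{\,y\sqrt{y+\varepsilon}\,}\right].
\]
This bound simultaneously shows $B\psi$ is bounded and nonnegative, so $B$ maps $W$ into $W$ (continuity of $B\psi$ in $x$ follows by dominated convergence). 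Everything therefore reduces to the uniform estimate $\sup_{x\ge\varepsilon}I(x)\le4$.

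The main obstacle is precisely this sharp constant. I would evaluate the two integrals explicitly, via $\int\frac{dy}{\sqrt{y+\varepsilon}}=2\sqrt{y+\varepsilon}$ and $\int\frac{dy}{y\sqrt{y+\varepsilon}}=\frac{1}{\sqrt\varepsilon}\ln\frac{\sqrt{y+\varepsilon}-\sqrt\varepsilon}{\sqrt{y+\varepsilon}+\sqrt\varepsilon}$, obtaining a closed form for $I$ in the single variable $s=\sqrt{x+\varepsilon}$. Writing $a=\sqrt\varepsilon$ and $t=a/s\in(0,1/\sqrt2\,]$, one finds $I=\dfrac{2(1-\sqrt2\,t)}{1-t^2}+\dfrac2t\,\operatorname{artanh}t$, whose Taylor expansion at $t=0$ is $4-2\sqrt2\,t+\tfrac83t^2+\cdots$. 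Hence $I\to4$ as $x\to\infty$ while $I<4$ near that limit, and I would then show $I(x)<4$ over the whole range, e.g.\ by proving $I$ is increasing in $x$ (equivalently that the function of $t$ above stays below $4$ on $(0,1/\sqrt2\,]$, its value at $t=1/\sqrt2$, i.e.\ $x=\varepsilon$, being $\sqrt2\,\ln(3+2\sqrt2)\approx2.49$). The delicacy is genuine: the naive estimates $\sqrt{y+\varepsilon}\ge\sqrt y$ and $\max(x,y)\ge(x+y)/2$ already overshoot, producing bounds like $4\sqrt{1+\varepsilon/x}>4$ or $2\pi$ in the limit $x\to\infty$, neither of which drops below $1$ after multiplication by $\lambda/4$ for all $\lambda\in(0,1)$; only the exact cancellation yields the threshold constant $4$.

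Once $\sup_x I(x)\le4$ is secured, the estimate $\|B\psi\|_\infty\le\lambda\|\psi\|_\infty$ follows, and with $0<\lambda<1$ the argument of the first paragraph forces $\psi\equiv0$ for every fixed point. Hence $0$ is the unique fixed point of $B$ in $W$, and the stated physical conclusion — each quark remains massless and the massless abelian gluon model realizes the chiral symmetry — is immediate. As an alternative that I would mention but not need, the derivative bound $|f'(s)|\le1/y$ for $f(s)=s/(y+s^2/(y+\varepsilon))$ upgrades the above to the Lipschitz estimate $\|B\psi-B\phi\|_\infty\le\lambda\|\psi-\phi\|_\infty$, making $B$ an honest contraction to which the Banach fixed-point theorem applies directly; but the growth bound alone already pins down $0$ as the unique fixed point.
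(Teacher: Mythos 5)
Your proposal is correct, and its fixed-point logic is genuinely more elementary than the paper's. The analytic core is the same: your claim $\sup_{x\geq\varepsilon}I(x)\leq 4$ is (after writing $y+x+|y-x|=2\max(x,y)$ and multiplying by $2$) precisely the paper's key estimate
\[
\sqrt{x+\varepsilon}\,\left\{\frac{1}{\,\sqrt{x+\varepsilon}+\sqrt{\varepsilon}\,}+\frac{1}{\,\sqrt{\varepsilon}\,}\ln\left(\sqrt{1+\frac{\,\varepsilon\,}{x}}+\sqrt{\frac{\,\varepsilon\,}{x}}\right)\right\}<2\qquad(x\geq\varepsilon),
\]
which is in fact slightly stronger than what you need, since the paper extends the lower limit of integration to $0$ (giving $\sqrt{\varepsilon}$ rather than your $\sqrt{2\varepsilon}$ in the first term). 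The divergence is in how the fixed point is pinned down. The paper proves the full Lipschitz estimate $\|B\psi-B\varphi\|\leq\lambda\|\psi-\varphi\|$ using the pointwise inequality $\left|yX/(y+X^2)-yY/(y+Y^2)\right|\leq|X-Y|$, invokes the Banach fixed-point theorem on the closed set $W$, and only afterwards identifies the unique fixed point as $0$. You instead note that $B0=0$ makes existence trivial and that the growth bound alone, $\|\psi\|=\|B\psi\|\leq\lambda\|\psi\|$, forces any fixed point to vanish; this bypasses the contraction lemma and the appeal to completeness of $W$ altogether, at the cost only of losing the (here irrelevant) convergence of Picard iteration that the contraction route provides. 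Your supporting computations are all correct: the closed form $I=2(1-\sqrt{2}\,t)/(1-t^2)+(2/t)\operatorname{artanh}t$ with $t\in(0,\,1/\sqrt{2}\,]$, the expansion $4-2\sqrt{2}\,t+\tfrac{8}{3}t^2-\cdots$, the endpoint value $2\sqrt{2}\ln(1+\sqrt{2})\approx 2.49$, and the observation that cruder bounds on the kernel overshoot the constant $4$. The one caveat is that the crucial inequality $I(x)<4$ on the whole half-line is left as a plan (``prove $I$ is increasing in $x$''); this is exactly the content of the paper's corresponding lemma, proved there by showing an auxiliary function $f(\xi)$, $\xi=x/\varepsilon\geq 1$, is positive via $f(1)>0$, $f(\xi)\to 0$ as $\xi\to\infty$, and the sign of $f'$. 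So to turn your sketch into a complete proof you must still carry out that one monotonicity/positivity argument, but the claim is true and your proposed route to it is viable.
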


\begin{remark}\label{rmk:w1}
Because of our mathematical interest, we let $\Lambda \to \infty$ in \eqref{eq:mnequationprime} and we consider the Banach space $B^0[\varepsilon,\, \infty)$ in \eqref{eq:setw}, as mentioned in Remark \ref{rmk:interest}. Indeed, when $0<\lambda<1$, we can similarly deal with the case where $\Lambda$ remains finite. In this case, we define the operator $C$ on the set
\[
W_1=\left\{ \psi\in C[\varepsilon,\, \Lambda]: \; \psi(x) \geq 0 \;\; \mbox{at} \;\; \mbox{all} \;\; x \in [\varepsilon,\,\Lambda] \right\}
\]
instead of the set $W$ (see \eqref{eq:setw}):
\[
C\psi(x)=\frac{\,\lambda\,}{2} \sqrt{x+\varepsilon}
\int_{\varepsilon}^{\Lambda} \frac{1}{\, y+x+|y-x| \,}\,
\frac{y}{\, \sqrt{y+\varepsilon}\,} \,
\frac{\psi(y)}{\, y+
\displaystyle{ \frac{\psi(y)^2}{\, y+\varepsilon \,} }\,}\, dy,
\quad \psi \in W_1\,, \quad \varepsilon \leq x \leq \Lambda.
\]
We can similarly show that the nonlinear integral operator $C: W_1 \to W_1$ has a unique fixed point $0 \in W_1$, and hence we conclude that the massless abelian gluon model realizes the chiral symmetry when $0<\lambda<1$.
\end{remark}

\section{Proof of Theorem \ref{thm:fxd-pt-A}}

In this section we prove Theorem \ref{thm:fxd-pt-A} in a sequence of lemmas. So, throughout this section, we assume $\lambda>2$ and suppose that \eqref{eq:varepsilonlambda} holds.

We study some properties of the operator $A$ given by \eqref{eq:operatora}.

\begin{lemma}
Let $w$ be as in \eqref{eq:functionw}. Then $Aw(x)>w(x)$ at all $x \in [\varepsilon,\, \Lambda]$.
\end{lemma}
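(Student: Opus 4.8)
The plan is to bound $Aw$ from below by exploiting the explicit power-law form of $w$ together with the smallness of $w$ forced by \eqref{eq:varepsilonlambda}. Writing $c=\tfrac{4\varepsilon}{\lambda}\sqrt{\varepsilon/\Lambda}$ so that $w(x)=c\,x^{-1/2}$ by \eqref{eq:functionw}, I first record $w(y)^2=c^2/y$ and hence $y+w(y)^2=(y^2+c^2)/y$. Since $y\ge\varepsilon$ gives $c^2\le (c^2/\varepsilon^2)\,y^2$, setting $\delta=c^2/\varepsilon^2=16\varepsilon/(\lambda^2\Lambda)$ yields $y+w(y)^2\le(1+\delta)\,y$. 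The crucial point is that $\delta$ is small, so this denominator estimate is nearly sharp: the cruder bound $y+w(y)^2\le 2y$ would only reach $\lambda>4$, whereas retaining $\delta$ is what pushes the argument down to $\lambda>2$.

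With this estimate, the two integrands in \eqref{eq:operatoraa} satisfy $\tfrac{y\,w(y)}{y+w(y)^2}\ge\tfrac{c}{(1+\delta)\sqrt{y}}$ and $\tfrac{w(y)}{y+w(y)^2}\ge\tfrac{c}{(1+\delta)\,y^{3/2}}$. Substituting these and evaluating the elementary integrals $\int_\varepsilon^x y^{-1/2}\,dy$ and $\int_x^\Lambda y^{-3/2}\,dy$ gives
\begin{equation*}
Aw(x)\ge\frac{\lambda c}{2(1+\delta)}\left(\frac{2}{\sqrt{x}}-\frac{\sqrt{\varepsilon}}{x}-\frac{1}{\sqrt{\Lambda}}\right).
\end{equation*}
Dividing the target inequality $Aw(x)>w(x)=c/\sqrt{x}$ by $c$, rearranging, and multiplying by $\sqrt{x}>0$, I reduce everything to showing $f(t)>0$ on $[\sqrt{\varepsilon},\sqrt{\Lambda}]$, where $t=\sqrt{x}$ and
\begin{equation*}
f(t)=2\left(1-\frac{1+\delta}{\lambda}\right)-\frac{\sqrt{\varepsilon}}{t}-\frac{t}{\sqrt{\Lambda}}.
\end{equation*}

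To handle $f$ I note that $f''(t)=-2\sqrt{\varepsilon}\,t^{-3}<0$, so $f$ is concave and its minimum over $[\sqrt{\varepsilon},\sqrt{\Lambda}]$ is attained at an endpoint. A short computation shows the two endpoint values coincide, both equal to $1-\tfrac{2}{\lambda}-\tfrac{2\delta}{\lambda}-\sqrt{\varepsilon/\Lambda}$, so it suffices to prove this single quantity is positive. Writing $s=\sqrt{\varepsilon/\Lambda}$ and recalling $\delta=16s^2/\lambda^2$, positivity is equivalent to $s+\tfrac{32 s^2}{\lambda^3}<\tfrac{\lambda-2}{\lambda}$. Here the hypothesis \eqref{eq:varepsilonlambda} enters decisively: its second term is precisely the positive root of $32s^2+\lambda s-(\lambda-2)=0$, so \eqref{eq:varepsilonlambda} gives $s+\tfrac{32s^2}{\lambda}\le\tfrac{\lambda-2}{\lambda}$, and since $\lambda>1$ and $s>0$ we have $\tfrac{32s^2}{\lambda^3}<\tfrac{32s^2}{\lambda}$, which closes the strict inequality.

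The main obstacle is the sharpness demanded in the first step: because the threshold is exactly $\lambda>2$, I cannot afford any lossy constant in the denominator bound and must carry the correction $\delta$ through to the end. The only other delicate point is recognizing that the awkward expression in \eqref{eq:varepsilonlambda} is exactly the solution of the quadratic $32s^2+\lambda s-(\lambda-2)=0$ produced by the endpoint estimate; once this is seen, the hypothesis matches the required inequality up to the harmless replacement of $\lambda$ by $\lambda^3$ in the quadratic term.
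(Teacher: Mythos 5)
Your proof is correct and takes essentially the same route as the paper: the same pointwise bound on the denominator $y+w(y)^2$ (you keep the slightly sharper constant $1+16\varepsilon/(\lambda^2\Lambda)$, which is why $\lambda^3$ appears and is then discarded via $\lambda^3>\lambda$), the same elementary integrals, and the same reduction of \eqref{eq:varepsilonlambda} to the quadratic $32s^2+\lambda s-(\lambda-2)\le 0$ in $s=\sqrt{\varepsilon/\Lambda}$. The paper expresses your concavity/endpoint step as the equivalent inequality $\sqrt{\varepsilon/x}+\sqrt{x/\Lambda}\le 1+\sqrt{\varepsilon/\Lambda}$, so the two arguments coincide in substance.
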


\begin{proof}\quad Since $y^2+\frac{16\varepsilon^3}{\, \lambda^2\Lambda\,}<
\left( 1+\frac{\, 16\varepsilon\,}{\Lambda} \right)y^2$ and $\sqrt{\frac{\,\varepsilon\,}{x}}+\sqrt{\frac{x}{\,\Lambda\,}} \leq  1+\sqrt{ \frac{\varepsilon}{\,\Lambda\,} }$, it follows from \eqref{eq:operatoraa} that
\begin{eqnarray*}
Aw(x) &=& w(x)\frac{\,\lambda\,}{4} \left\{ 
\frac{1}{\, \sqrt{x}\,}\int_{\varepsilon}^x
\frac{y^{3/2}}{\, y^2+\frac{16\varepsilon^3}{\, \lambda^2\Lambda\,}\,}\, dy
+\sqrt{x}\int_x^{\Lambda}
\frac{y^{1/2}}{\, y^2+\frac{16\varepsilon^3}{\, \lambda^2\Lambda\,}\,}
\, dy \right\} \\
\, &>& w(x)\frac{\lambda}{\, 1+\frac{\, 16\varepsilon\,}{\Lambda}\,}
\left\{ 1-\frac{1}{\, 2\,}
\left( \sqrt{\frac{\,\varepsilon\,}{x}}+\sqrt{\frac{x}{\,\Lambda\,}}
\right) \right\} \\
\, &\geq& w(x)\frac{ \lambda\left( 1-\sqrt{ \frac{\varepsilon}{\,\Lambda\,} }
\right)} {\, 2\left( 1+\frac{\, 16\varepsilon\,}{\Lambda} \right)\,}.
\end{eqnarray*}
The inequality \   $\displaystyle{
\frac{ \lambda\left( 1-\sqrt{ \frac{\varepsilon}{\,\Lambda\,} } \right)}
{\, 2\left( 1+\frac{\, 16\varepsilon\,}{\Lambda} \right)\,} \geq 1
}$ follows from \eqref{eq:varepsilonlambda}.
\end{proof}

\begin{lemma}\label{lm:important}
Let $w$ be as in \eqref{eq:functionw} and let $u \in V$. Then \   $\displaystyle{ \frac{u(x)}{\, x+u(x)^2\,} \geq \frac{w(x)}{\, x+w(x)^2\,} }$, and hence $Au(x) \geq Aw(x)$ at all $x \in [\varepsilon,\, \Lambda]$.
\end{lemma}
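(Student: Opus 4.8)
The plan is to reduce the claimed pointwise inequality to a single product bound by clearing denominators, and then to transport it through the integral operator by positivity.

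First I would observe that, since $x+u(x)^2>0$ and $x+w(x)^2>0$, the inequality $\frac{u(x)}{x+u(x)^2}\ge\frac{w(x)}{x+w(x)^2}$ is equivalent to $u(x)\bigl(x+w(x)^2\bigr)\ge w(x)\bigl(x+u(x)^2\bigr)$. Expanding both sides and cancelling the common $x$-free terms, this rearranges to $x\bigl(u(x)-w(x)\bigr)\ge u(x)w(x)\bigl(u(x)-w(x)\bigr)$. Because $u\in V$ satisfies $u(x)\ge w(x)$ by \eqref{eq:setv}, the factor $u(x)-w(x)$ is nonnegative (and the inequality is automatic where $u(x)=w(x)$), so the whole inequality follows as soon as $u(x)\,w(x)\le x$.

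It therefore remains to verify $u(x)\,w(x)\le x$ for every $u\in V$ and every $x\in[\varepsilon,\Lambda]$, and here the upper bound built into the definition of $V$ does the work. Using $u(x)\le\frac{\lambda}{4}\sqrt{\Lambda}$ together with the explicit form \eqref{eq:functionw} of $w$, I would compute
\[
u(x)\,w(x)\le\frac{\lambda}{4}\sqrt{\Lambda}\cdot\frac{4\varepsilon}{\lambda}\sqrt{\frac{\varepsilon}{\,\Lambda x\,}}=\frac{\varepsilon^{3/2}}{\sqrt{x}}\le\frac{x^{3/2}}{\sqrt{x}}=x,
\]
where the final inequality is simply $\varepsilon\le x$. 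This proves the first assertion. (Geometrically, the map $t\mapsto t/(x+t^2)$ increases on $(0,\sqrt{x})$ and decreases on $(\sqrt{x},\infty)$, with equal values characterized by $t_1t_2=x$; the estimate $u(x)w(x)\le x$ says exactly that $u(x)$ has not been pushed past the reflection of $w(x)$ through the peak. The algebra above makes invoking this unimodality unnecessary.)

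For the final ``hence'' I would feed the pointwise inequality into \eqref{eq:operatora}. Writing the inequality with the integration variable $y$ in place of $x$ gives $\frac{u(y)}{y+u(y)^2}\ge\frac{w(y)}{y+w(y)^2}$ at every $y\in[\varepsilon,\Lambda]$; multiplying by the nonnegative factor $y$ and by the strictly positive kernel $1/(y+x+|y-x|)$ and integrating preserves the inequality, so $Au(x)\ge Aw(x)$ at all $x\in[\varepsilon,\Lambda]$. I do not anticipate a serious obstacle: the only point requiring care is the direction of the inequality in the cross-multiplication step, which is governed by the sign of $u(x)-w(x)$, and the verification that the upper bound defining $V$ is calibrated precisely so that $u(x)w(x)\le x$ holds throughout $[\varepsilon,\Lambda]$.
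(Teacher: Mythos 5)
Your proof is correct and follows essentially the same route as the paper: both arguments reduce the pointwise inequality to the product bound $u(x)\,w(x) \leq x$ (which you verify from the upper bound $\lambda\sqrt{\Lambda}/4$ in the definition of $V$ together with $\varepsilon \leq x$, exactly as the paper does), exploit $u(x) \geq w(x)$ to handle the factor $u(x)-w(x)$, and then integrate against the positive kernel. No gaps; the unimodality remark is a nice aside but, as you note, unnecessary.
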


\begin{proof}
Let $u \in V$ and let $y \in [\varepsilon,\, \Lambda]$ be fixed. Since $y \geq \varepsilon$, it follows that
\[
u(y) \leq \frac{\,\lambda\,}{4}\sqrt{\Lambda} \leq \frac{\,\lambda\,}{4}\sqrt{\Lambda}
\left( \frac{y}{\,\varepsilon\,} \right)^{3/2}=\frac{y}{\, w(y)\,}.
\]
Hence $y \geq u(y)w(y)$. If $u(y)>w(y)$, then $y\{ u(y)-w(y)\} \geq u(y)w(y)\{ u(y)-w(y)\}$.
Therefore,
\begin{equation}\label{eq:inequality}
\frac{u(y)}{\, y+u(y)^2\,} \geq \frac{w(y)}{\, y+w(y)^2\,}.
\end{equation}
The inequality \eqref{eq:inequality} holds true even if $u(y)=w(y)$.
Hence \eqref{eq:inequality} holds true for all $u \in V$ and at all $y \in [\varepsilon,\, \Lambda]$. Thus
\[
Au(x) \geq \frac{\,\lambda\,}{2}\int_{\varepsilon}^{\Lambda}
\frac{1}{\, y+x+|y-x| \,}\, \frac{y\, w(y)}{\, y+w(y)^2\,}\, dy=Aw(x).
\]
\end{proof}

The two lemmas just above are our key lemmas in this section.

\begin{lemma}\label{lm:uniformbound}
If $u \in V$, then $\displaystyle{ Au(x) \leq \frac{\,\lambda\,}{4}\sqrt{\Lambda} }$ at all $x \in [\varepsilon,\, \Lambda]$.
\end{lemma}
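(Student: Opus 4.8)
The plan is to start from the representation \eqref{eq:operatoraa} and to bound each of the two integrands by a quantity that does not depend on $u$. The crucial observation is the elementary pointwise inequality
\[
\frac{u(y)}{\, y+u(y)^2\,} \leq \frac{1}{\, 2\sqrt{y}\,}, \qquad \varepsilon \leq y \leq \Lambda,
\]
which follows by completing the square: since $y+u(y)^2-2\sqrt{y}\,u(y)=\bigl(\sqrt{y}-u(y)\bigr)^2\geq 0$, we have $y+u(y)^2\geq 2\sqrt{y}\,u(y)$, and dividing by the positive quantity $y+u(y)^2$ and by $2\sqrt{y}$ yields the claim. Here it is important that $u(y)>0$ (guaranteed by $u\in V$ together with $w>0$), so that the sense of the inequality is preserved. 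Multiplying through by $y$ also gives $\frac{y\,u(y)}{\, y+u(y)^2\,}\leq \frac{\sqrt{y}}{\, 2\,}$.

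Next I would substitute these two pointwise bounds into the two integrals in \eqref{eq:operatoraa} and carry out the elementary integrations of $\sqrt{y}$ and $1/\sqrt{y}$. The first term is estimated by
\[
\frac{1}{\, x\,}\int_\varepsilon^x \frac{y\,u(y)}{\, y+u(y)^2\,}\,dy
\leq \frac{1}{\, x\,}\int_\varepsilon^x \frac{\sqrt{y}}{\, 2\,}\,dy
=\frac{1}{\, 3x\,}\bigl(x^{3/2}-\varepsilon^{3/2}\bigr)\leq \frac{\sqrt{x}}{\, 3\,},
\]
while the second term is estimated by
\[
\int_x^\Lambda \frac{u(y)}{\, y+u(y)^2\,}\,dy
\leq \int_x^\Lambda \frac{1}{\, 2\sqrt{y}\,}\,dy=\sqrt{\Lambda}-\sqrt{x}.
\]

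Finally I would add the two estimates and simplify, obtaining
\[
Au(x)\leq \frac{\,\lambda\,}{4}\left(\frac{\sqrt{x}}{\, 3\,}+\sqrt{\Lambda}-\sqrt{x}\right)
=\frac{\,\lambda\,}{4}\left(\sqrt{\Lambda}-\frac{\, 2\sqrt{x}\,}{3}\right)
\leq \frac{\,\lambda\,}{4}\sqrt{\Lambda},
\]
where the last inequality holds because $2\sqrt{x}/3\geq 0$. This is the desired uniform bound. I expect no serious obstacle: the argument is entirely elementary once the completing-the-square inequality is in hand, and the only points requiring care are the positivity of $u(y)$, which fixes the direction of the pointwise inequality, and the correct bookkeeping of the two integrations. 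Notably, the estimate does not require the hypothesis \eqref{eq:varepsilonlambda}, so it holds for every $u\in V$ without further restriction on $\varepsilon$ and $\Lambda$.
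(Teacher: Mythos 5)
Your proof is correct and follows essentially the same route as the paper: the paper's one-line proof also uses the AM--GM bound $y+u(y)^2\geq 2\sqrt{y}\,u(y)$ to reduce \eqref{eq:operatoraa} to the elementary integrals of $\sqrt{y}$ and $1/\sqrt{y}$, then evaluates them exactly as you do. Your write-up merely makes explicit the completing-the-square step and the final bookkeeping that the paper leaves implicit.
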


\begin{proof}
By \eqref{eq:operatoraa}, \quad $\displaystyle{
Au(x) \leq \frac{\,\lambda\,}{8} \left\{
\frac{1}{\, x\,} \int_{\varepsilon}^x \sqrt{y}\, dy
+\int_x^{\Lambda} \frac{1}{\, \sqrt{y} \,}\, dy \right\}<
\frac{\,\lambda\,}{4}\sqrt{\Lambda} }$.
\end{proof}

\begin{lemma}\label{lm:equicont}
If $u \in V$, then $Au \in C[\varepsilon,\, \Lambda]$.
\end{lemma}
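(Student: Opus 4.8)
The plan is to work from the equivalent representation \eqref{eq:operatoraa} of $Au$ rather than from \eqref{eq:operatora}, since the former isolates the $x$-dependence into the elementary factor $1/x$ and two integrals with variable limits. Writing $f(y)=\frac{y\,u(y)}{\,y+u(y)^2\,}$ and $g(y)=\frac{u(y)}{\,y+u(y)^2\,}$, we have $Au(x)=\frac{\lambda}{4}\bigl\{\frac{1}{x}\int_{\varepsilon}^{x} f + \int_{x}^{\Lambda} g\bigr\}$. The first step is to note that, because $u\in V$ is continuous and $y+u(y)^2\geq y\geq\varepsilon>0$, both $f$ and $g$ are continuous on $[\varepsilon,\Lambda]$, hence bounded there. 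The quickest route to the stated conclusion is then immediate: $x\mapsto\int_\varepsilon^x f$ and $x\mapsto\int_x^\Lambda g$ are continuous by the fundamental theorem of calculus, and $x\mapsto 1/x$ is continuous on $[\varepsilon,\Lambda]$ because $\varepsilon>0$, so $Au$, being a sum of products of continuous functions, lies in $C[\varepsilon,\Lambda]$.

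However, since the fixed-point argument for Theorem \ref{thm:fxd-pt-A} will need relative compactness of $A(V)$ via Arzel\`a--Ascoli, I would instead prove the sharper fact that the family $\{Au:u\in V\}$ is uniformly Lipschitz, which yields continuity of each $Au$ as a byproduct. To that end I would first record the uniform pointwise bounds obtained from the arithmetic--geometric mean inequality $y+u(y)^2\geq 2\sqrt{y}\,u(y)$, namely $f(y)\leq\frac{\sqrt{y}}{2}\leq\frac{\sqrt{\Lambda}}{2}$ and $g(y)\leq\frac{1}{2\sqrt{y}}\leq\frac{1}{2\sqrt{\varepsilon}}$ --- the very estimates already used in the proof of Lemma \ref{lm:uniformbound}, and crucially independent of the particular $u\in V$.

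Next, for $\varepsilon\leq x_1<x_2\leq\Lambda$ I would expand the difference as
\[
Au(x_2)-Au(x_1)=\frac{\lambda}{4}\left[\Bigl(\tfrac{1}{x_2}-\tfrac{1}{x_1}\Bigr)\int_{\varepsilon}^{x_1} f+\tfrac{1}{x_2}\int_{x_1}^{x_2} f-\int_{x_1}^{x_2} g\right]
\]
and bound the three terms separately, using $\bigl|\tfrac{1}{x_2}-\tfrac{1}{x_1}\bigr|=\frac{x_2-x_1}{x_1x_2}\leq\frac{x_2-x_1}{\varepsilon^2}$ together with the uniform bounds on $f$ and $g$ above. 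This produces $|Au(x_2)-Au(x_1)|\leq L\,|x_2-x_1|$ with a constant $L$ depending only on $\lambda,\varepsilon,\Lambda$, so in particular $Au\in C[\varepsilon,\Lambda]$, and the family is equicontinuous.

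The individual estimates are routine; the only point requiring care is the bookkeeping around the factor $1/x$ multiplying the first integral. I expect the main (mild) obstacle to be handling the term $\bigl(\tfrac{1}{x_2}-\tfrac{1}{x_1}\bigr)\int_{\varepsilon}^{x_1} f$ correctly: one must not treat $1/x$ carelessly near the lower endpoint, but since $\varepsilon>0$ the map $x\mapsto 1/x$ is Lipschitz on $[\varepsilon,\Lambda]$ with constant $1/\varepsilon^2$, while the accompanying integral $\int_{\varepsilon}^{x_1} f$ is uniformly bounded by $\sqrt{\Lambda}\,(\Lambda-\varepsilon)/2$, so no singularity arises and the argument closes.
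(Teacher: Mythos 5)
Your proof is correct, and it takes a genuinely different route from the paper's. The paper works directly with the kernel form \eqref{eq:operatora}: it bounds the $u$-dependent factor by $\frac{y\,u(y)}{\,y+u(y)^2\,}\leq\frac{\sqrt{y}}{2}$ (the same AM--GM estimate you use) and then appeals to the uniform continuity of $(x,y)\mapsto\frac{1}{\,y+x+|y-x|\,}$ on the compact square $[\varepsilon,\Lambda]^2$, obtaining a $\delta$ independent of $x_0$, $y$ and $u$; that soft compactness argument gives equicontinuity of $AV$ without any explicit modulus. You instead work with the split representation \eqref{eq:operatoraa}, and your algebraic decomposition of $Au(x_2)-Au(x_1)$ into the three terms $\bigl(\tfrac{1}{x_2}-\tfrac{1}{x_1}\bigr)\int_{\varepsilon}^{x_1} f$, $\tfrac{1}{x_2}\int_{x_1}^{x_2} f$ and $-\int_{x_1}^{x_2} g$ is correct, as are the bounds $f\leq\frac{\sqrt{\Lambda}}{2}$, $g\leq\frac{1}{2\sqrt{\varepsilon}}$ and $\bigl|\tfrac{1}{x_2}-\tfrac{1}{x_1}\bigr|\leq\frac{|x_2-x_1|}{\varepsilon^2}$; together they yield a Lipschitz constant $L=L(\lambda,\varepsilon,\Lambda)$ valid for every $u\in V$. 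This is strictly stronger than what the paper proves: uniform Lipschitz continuity of the family $AV$ with an explicit constant, versus equicontinuity with an unspecified modulus. Both versions serve equally well in the subsequent lemma, where relative compactness of $AV$ is deduced from Ascoli--Arzel\`a (and your version would let one quantify the compactness if desired); the paper's argument is shorter because it outsources the hard work to uniform continuity on a compact set, while yours buys explicit constants at the cost of slightly more bookkeeping. Your preliminary ``quick route'' via the fundamental theorem of calculus is also a valid proof of the lemma exactly as stated, and you are right to note that it alone would not suffice for the later compactness argument, since it gives no control uniform in $u$.
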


\begin{proof}
Let $\varepsilon \leq x_0 \leq \Lambda$. Then, for $u \in V$,
\[
\left| Au(x)-Au(x_0) \right| \leq \frac{\,\lambda\,}{2}
\int_{\varepsilon}^{\Lambda}
\left| \frac{1}{\, y+x+|y-x| \,}-\frac{1}{\, y+x_0+|y-x_0| \,} \right|
\frac{\,\sqrt{y} \,}{2}\, dy.
\]
The function $\displaystyle{ (x,\, y) \mapsto \frac{1}{\, y+x+|y-x| \,} }$ is uniformly continuous on $[\varepsilon,\, \Lambda]^2$. Hence, for an arbitrary $\varepsilon_1>0$, there is a $\delta>0$ satifying
\[
\left| \frac{1}{\, y+x+|y-x| \,}-\frac{1}{\, y+x_0+|y-x_0| \,} \right| < \varepsilon_1 \, ,\qquad \left| x-x_0 \right| < \delta.
\]
Note that $\delta$ depends neither on $x$, nor on $x_0$, nor on $y$, nor on $u$. Therefore,
\[
\left| Au(x)-Au(x_0) \right| < \frac{\,\lambda\Lambda^{3/2}\,}{6}\varepsilon_1 \,,\qquad \left| x-x_0 \right| < \delta.
\]
\end{proof}

These four lemmas above imply that the set $AV=\left\{ Au: \; u \in V \right\}$ is a subset of $V$.
\begin{lemma}
\quad  $AV \subset V$.
\end{lemma}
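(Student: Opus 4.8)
The plan is to verify that $Au$ satisfies each of the three defining conditions of $V$ whenever $u \in V$, thereby assembling the four lemmas just established into the inclusion $AV \subset V$. Recall from \eqref{eq:setv} that membership in $V$ requires (i) continuity on $[\varepsilon,\,\Lambda]$, (ii) the lower bound $w(x) \leq Au(x)$, and (iii) the upper bound $Au(x) \leq \frac{\lambda}{4}\sqrt{\Lambda}$, all holding at every $x \in [\varepsilon,\,\Lambda]$.

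For (i), I would invoke Lemma \ref{lm:equicont} directly, which asserts $Au \in C[\varepsilon,\,\Lambda]$ for every $u \in V$. For (iii), Lemma \ref{lm:uniformbound} supplies the upper bound $Au(x) \leq \frac{\lambda}{4}\sqrt{\Lambda}$ at all $x$. The lower bound (ii) is where the two key lemmas of the section combine: the first lemma of this section yields $Aw(x) > w(x)$, while Lemma \ref{lm:important} yields $Au(x) \geq Aw(x)$ for every $u \in V$. Chaining these gives
\[
Au(x) \geq Aw(x) > w(x) \qquad \text{at all } x \in [\varepsilon,\,\Lambda],
\]
which in particular establishes $w(x) \leq Au(x)$. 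With (i)--(iii) in hand, $Au \in V$, and since $u \in V$ was arbitrary, $AV \subset V$ follows.

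I expect no real obstacle at this final step, since every quantitative estimate has already been carried out in the four preceding lemmas and this stage is purely a matter of bookkeeping. The genuine difficulty was concentrated earlier---most notably in proving $Aw(x) > w(x)$, which hinges on the cutoff restriction \eqref{eq:varepsilonlambda}, and in the pointwise comparison $u(x)/(x+u(x)^2) \geq w(x)/(x+w(x)^2)$ underlying Lemma \ref{lm:important}. Once those monotonicity and boundedness facts are available, the inclusion is immediate, which is precisely why the preceding remark already anticipates this conclusion.
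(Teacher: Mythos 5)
Your proposal is correct and follows exactly the route the paper intends: the paper states this lemma without a separate proof, presenting it as an immediate consequence of the four preceding lemmas, and your assembly (continuity from Lemma \ref{lm:equicont}, the upper bound from Lemma \ref{lm:uniformbound}, and the lower bound from chaining $Au(x) \geq Aw(x) > w(x)$ via Lemma \ref{lm:important} and the first lemma of the section) is precisely that argument made explicit.
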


\begin{lemma}
The set $AV$ is relatively compact.
\end{lemma}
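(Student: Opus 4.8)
The plan is to invoke the Arzel\`a--Ascoli theorem, whose two hypotheses---uniform boundedness and equicontinuity of $AV$ as a family in $C[\varepsilon,\,\Lambda]$---have in fact already been secured by the two preceding lemmas. First I would record the uniform boundedness: by Lemma \ref{lm:uniformbound}, every $u \in V$ satisfies $0 < Au(x) \leq \frac{\lambda}{4}\sqrt{\Lambda}$ at all $x \in [\varepsilon,\,\Lambda]$, so $\|Au\|_{\infty} \leq \frac{\lambda}{4}\sqrt{\Lambda}$ with a bound independent of $u$. This confines the entire family $AV$ to a single norm ball of $C[\varepsilon,\,\Lambda]$.

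Next I would extract equicontinuity directly from the proof of Lemma \ref{lm:equicont}. The crucial feature there is that the modulus $\delta$, produced from the uniform continuity of $(x,y) \mapsto (y+x+|y-x|)^{-1}$ on the compact square $[\varepsilon,\,\Lambda]^2$, depends neither on $x$, nor on $x_0$, nor on $y$, nor on the choice of $u \in V$. Consequently, given any $\varepsilon_1 > 0$, the single $\delta > 0$ furnished there yields $|Au(x) - Au(x_0)| < \frac{\lambda \Lambda^{3/2}}{6}\varepsilon_1$ for every $u \in V$ and every pair $x, x_0$ with $|x - x_0| < \delta$. Since $\varepsilon_1$ is arbitrary, this is precisely the (uniform) equicontinuity of the family $AV$.

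With both hypotheses in hand, the Arzel\`a--Ascoli theorem immediately gives that $AV$ is relatively compact in $C[\varepsilon,\,\Lambda]$. I do not anticipate a genuine obstacle here, since all the analytic work has been done upstream; the only point deserving care is to confirm that the equicontinuity is \emph{uniform over} $u \in V$ rather than merely pointwise for each fixed $u$, and this is exactly what the stated independence of $\delta$ from $u$ in Lemma \ref{lm:equicont} guarantees.
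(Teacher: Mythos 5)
Your proposal is correct and follows exactly the paper's own argument: uniform boundedness from Lemma \ref{lm:uniformbound}, equicontinuity from the fact that the $\delta$ in the proof of Lemma \ref{lm:equicont} is independent of $u \in V$, and then the Arzel\`a--Ascoli theorem. Your remark singling out the uniformity of $\delta$ over $u$ as the essential point is precisely the observation the paper itself makes.
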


\begin{proof}
By Lemma \ref{lm:uniformbound}, the set $AV$ is uniformly bounded. As mentioned in the proof of Lemma \ref{lm:equicont}, the $\delta$ does not depend on $u \in V$. Hence the set $AV$ is equicontinuous. The result thus follows from the Ascoli--Arzel$\grave{\mbox{a}}$ theorem.
\end{proof}

\begin{lemma}
The operator $A: V \to V$ is continuous.
\end{lemma}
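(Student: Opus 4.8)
The plan is to show that $A$ is in fact Lipschitz continuous with respect to the supremum norm of $C[\varepsilon,\, \Lambda]$, which is stronger than the asserted continuity and is all that a fixed-point argument requires. First I would fix two functions $u,v \in V$ and write the pointwise difference
\[
Au(x)-Av(x)=\frac{\,\lambda\,}{2}\int_{\varepsilon}^{\Lambda}
\frac{1}{\, y+x+|y-x| \,}\, y\left( \frac{u(y)}{\, y+u(y)^2\,}-\frac{v(y)}{\, y+v(y)^2\,} \right) dy.
\]

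The key algebraic step is to combine the inner difference over a common denominator. A short computation gives
\[
\frac{u(y)}{\, y+u(y)^2\,}-\frac{v(y)}{\, y+v(y)^2\,}
=\frac{\bigl( u(y)-v(y) \bigr)\bigl( y-u(y)v(y) \bigr)}{\bigl( y+u(y)^2 \bigr)\bigl( y+v(y)^2 \bigr)},
\]
so that this difference is controlled by $|u(y)-v(y)| \leq \|u-v\|_{\infty}$ times a factor that I can bound uniformly using the defining constraints of $V$.

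Next I would bound the remaining factors by constants depending only on $\varepsilon$, $\Lambda$ and $\lambda$. Since $y+x+|y-x|=2\max(x,y) \geq 2y$, the kernel together with the explicit factor $y$ satisfies $\frac{y}{\, y+x+|y-x| \,} \leq \frac{1}{2}$ uniformly in $x$ and $y$. Because $w(y) \leq u(y),\, v(y) \leq \frac{\lambda}{4}\sqrt{\Lambda}$, I have $u(y)v(y) \leq \frac{\lambda^2}{16}\Lambda$, hence $|y-u(y)v(y)| \leq \Lambda\,\max\!\left(1,\, \frac{\lambda^2}{16}\right)$; and the denominator obeys $\bigl( y+u(y)^2 \bigr)\bigl( y+v(y)^2 \bigr) \geq y^2 \geq \varepsilon^2$. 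Assembling these estimates yields
\[
\left| Au(x)-Av(x) \right| \leq \frac{\,\lambda(\Lambda-\varepsilon)\,}{4\varepsilon^2}\, \Lambda\,\max\!\left(1,\, \frac{\lambda^2}{16}\right) \|u-v\|_{\infty},
\]
and taking the supremum over $x \in [\varepsilon,\, \Lambda]$ gives $\|Au-Av\|_{\infty} \leq C\|u-v\|_{\infty}$ with $C$ a finite constant independent of $u$ and $v$. This proves continuity (indeed Lipschitz continuity) of $A$.

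The computations here are routine; the only point requiring care is to verify that every factor other than $|u(y)-v(y)|$ admits a bound that is uniform in $y \in [\varepsilon,\, \Lambda]$ and independent of the particular choice of $u,v \in V$. This is exactly what the two-sided bound defining $V$ provides, so no genuine obstacle arises. One should simply be careful that the lower bound on the product of the denominators uses only $y \geq \varepsilon$ rather than any information about $u$ or $v$, so that the constant $C$ truly does not depend on the functions chosen.
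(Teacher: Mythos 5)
Your proof is correct: the common-denominator identity is right, each of your bounds holds ($y+x+|y-x|=2\max(x,y)\geq 2y$; $u(y)v(y)\leq \lambda^2\Lambda/16$ from the upper bound defining $V$; $(y+u(y)^2)(y+v(y)^2)\geq y^2\geq\varepsilon^2$), and the resulting Lipschitz estimate gives continuity, which is all the Schauder argument needs. The paper also proves Lipschitz continuity starting from the same pointwise difference, but it bounds the integrand differently: rather than invoking the constraints defining $V$, it uses the estimate $\left|\frac{yX}{y+X^2}-\frac{yY}{y+Y^2}\right|\leq |X-Y|$, valid for \emph{all} $X,Y\geq 0$ and all $y>0$. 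This follows from your same identity by the sharper chain $y\,|y-XY|\leq y(y+2XY)\leq y^2+y(X^2+Y^2)\leq (y+X^2)(y+Y^2)$, so the factor multiplying $|X-Y|$ is at most $1$ uniformly. The paper then only has to integrate the kernel, $\int_\varepsilon^\Lambda \frac{dy}{y+x+|y-x|}\leq \frac{1}{2}\left(1+\ln\frac{\Lambda}{\varepsilon}\right)$, arriving at the Lipschitz constant $\frac{\lambda}{4}\left(1+\ln\frac{\Lambda}{\varepsilon}\right)$, which grows only logarithmically in $\Lambda/\varepsilon$, whereas yours is of order $\lambda\Lambda^2/\varepsilon^2$. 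The sharper route buys two things: a constant that degrades mildly as the cutoffs separate, and an estimate that uses nothing about $V$ beyond nonnegativity of its elements. Indeed, the paper reuses exactly this inequality in Section 4 to show that $B$ is a contraction on $W$ (where $0<\lambda<1$ makes the constant less than $1$, and where no two-sided bound on $\psi$ is available); your cruder bound, which needs both the lower bound $y\geq\varepsilon$ in the denominator and the upper bound $\lambda\sqrt{\Lambda}/4$ on elements of $V$, would not transfer to that setting, though it is perfectly adequate for the present lemma.
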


\begin{proof}
Let $u,\, v \in V$. Then, at every $x \in [\varepsilon,\, \Lambda]$,
\[
\left| Au(x)-Av(x) \right| \leq \frac{\,\lambda\,}{2} \,
\left\| u-v \right\|_{C[\varepsilon,\, \Lambda]}
\int_{\varepsilon}^{\Lambda} \frac{1}{\, y+x+|y-x| \,} \, dy\, .
\]
Here, $\left\| \cdot \right\|_{C[\varepsilon,\, \Lambda]}$ denotes the norm of the Banach space $C[\varepsilon,\, \Lambda]$. Since
\[
\int_{\varepsilon}^{\Lambda} \frac{1}{\, y+x+|y-x| \,} \, dy \leq
\frac{1}{\, 2\,} \left( 1+\ln\frac{\,\Lambda\,}{\varepsilon} \right),
\]
it follows \   $\displaystyle{ 
\left\| Au-Av \right\|_{C[\varepsilon,\, \Lambda]} \leq
\frac{\,\lambda\,}{4} \left( 1+\ln\frac{\,\Lambda\,}{\varepsilon} \right)
\left\| u-v \right\|_{C[\varepsilon,\, \Lambda]} }$.
\end{proof}

The set $V$ is clearly bounded, closed and convex. The Schauder fixed-point theorem (see e.g. Zeidler \cite[p. 61]{zeidler}) thus implies the following.

\begin{lemma}\label{lm:fixedpt}
The operator $A: V \to V$ has at least one fixed point $u_0 \in V$. Consequently, the massless abelian gluon model generates the nonzero quark mass spontaneously, and hence the model exhibits the spontaneous chiral symmetry breaking.
\end{lemma}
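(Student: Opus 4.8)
The plan is to invoke the Schauder fixed-point theorem directly, since the preceding lemmas have already assembled every hypothesis it requires. The version I would use is the following: if $V$ is a nonempty, bounded, closed and convex subset of a Banach space and $A\colon V\to V$ is continuous with $A(V)$ relatively compact, then $A$ has at least one fixed point in $V$. Here the ambient Banach space is $C[\varepsilon,\Lambda]$ with the supremum norm, and $V$ is the set defined in \eqref{eq:setv}.

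First I would confirm the geometric hypotheses on $V$. Boundedness, closedness and convexity have already been recorded in the sentence preceding the statement, so the only point needing a line is nonemptiness: I would exhibit $w\in V$. Indeed $w(x)\le w(x)$ trivially, while condition \eqref{eq:varepsilonlambda} forces $\varepsilon/\Lambda\le 1/16<\lambda^2/16$, which is exactly $w(\varepsilon)=4\varepsilon/(\lambda\sqrt\Lambda)\le(\lambda/4)\sqrt\Lambda$. Since $w$ is decreasing, its maximum is attained at $x=\varepsilon$, so $w(x)\le(\lambda/4)\sqrt\Lambda$ throughout; thus $w\in V$ and $V\neq\varnothing$.

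Next I would collect the analytic hypotheses from the earlier lemmas. The inclusion $A(V)\subset V$, so that $A$ maps $V$ into itself, is the content of the lemma asserting $AV\subset V$; it rests on the lower bound $Au\ge Aw>w$ (combining the opening lemma $Aw>w$ with Lemma~\ref{lm:important}), on Lemma~\ref{lm:uniformbound} for the upper bound $Au\le(\lambda/4)\sqrt\Lambda$, and on Lemma~\ref{lm:equicont} for continuity of $Au$. Relative compactness of $A(V)$ is the subsequent lemma, obtained from uniform boundedness together with equicontinuity via the Ascoli--Arzel\`a theorem; the decisive point is that the modulus $\delta$ in Lemma~\ref{lm:equicont} is independent of $u$, which is precisely what makes the family $\{Au:u\in V\}$ equicontinuous. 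Continuity of $A$ is the penultimate lemma. Because $V$ is closed and $A(V)\subset V$, we also have $\overline{A(V)}\subset V$, so all hypotheses hold and Schauder yields a fixed point $u_0\in V$.

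Finally I would read off the physical statement. Since $u_0\in V$, the lower bound in \eqref{eq:setv} gives $u_0(x)\ge w(x)>0$ for every $x\in[\varepsilon,\Lambda]$, so $u_0$ is a strictly positive, hence nonzero, solution of \eqref{eq:mnequation}; this is exactly the spontaneous generation of a nonzero quark mass and therefore the spontaneous chiral symmetry breaking. The genuine difficulty is not in this concluding lemma but in the estimates behind the self-mapping property and relative compactness carried out earlier; the one place I would stay careful is to invoke the form of Schauder's theorem valid for a closed bounded convex set with relatively compact image, rather than the compact-convex-set version, since $V$ itself is not compact.
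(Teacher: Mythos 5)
Your proposal is correct and follows essentially the same route as the paper: the paper likewise notes that $V$ is bounded, closed and convex and then invokes the Schauder fixed-point theorem, with the self-mapping property $AV\subset V$, the relative compactness of $AV$, and the continuity of $A:V\to V$ supplied by the preceding lemmas. Your explicit verification that $V\neq\varnothing$ (by checking $w\in V$ using \eqref{eq:varepsilonlambda}) is a small point the paper leaves implicit, but it does not alter the argument.
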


\begin{lemma}
Let $u_0 \in V$ be a fixed point of the operator $A: V \to V$ given by Lemma \ref{lm:fixedpt}. Then \  $\displaystyle{ u_0 \in C^{\infty}[\varepsilon,\, \Lambda] }$, \  $\displaystyle{ u_0'(\varepsilon)=0 }$ and $u_0$ is strictly decreasing on $[\varepsilon,\, \Lambda]$.
\end{lemma}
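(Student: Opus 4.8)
The plan is to work directly from the fixed-point identity $u_0=Au_0$ written in the split form \eqref{eq:operatoraa}, and to extract all three assertions from a single clean formula for $u_0'$. Throughout, the key structural fact is that the denominator satisfies $y+u_0(y)^2\geq y\geq\varepsilon>0$, so the integrands $\frac{y\,u_0(y)}{y+u_0(y)^2}$ and $\frac{u_0(y)}{y+u_0(y)^2}$ are continuous functions of $y$ built from $u_0$ without any singularity.

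First I would establish that $u_0\in C^1$. Writing $F(x)=\int_\varepsilon^x \frac{y\,u_0(y)}{y+u_0(y)^2}\,dy$ and $G(x)=\int_x^\Lambda \frac{u_0(y)}{y+u_0(y)^2}\,dy$, the fixed-point equation reads $u_0(x)=\frac{\lambda}{4}\bigl(F(x)/x+G(x)\bigr)$. Since $u_0$ is continuous, both integrands are continuous, so $F$ and $G$ are $C^1$ by the fundamental theorem of calculus and $u_0$ is therefore $C^1$. Differentiating and using that the boundary contributions at $y=x$ cancel --- precisely because $\frac{1}{x}\cdot\frac{x\,u_0(x)}{x+u_0(x)^2}=\frac{u_0(x)}{x+u_0(x)^2}$ --- I expect the clean identity
\[
u_0'(x) = -\frac{\lambda}{\,4x^2\,}\int_\varepsilon^x \frac{y\,u_0(y)}{\, y+u_0(y)^2\,}\,dy,
\qquad \varepsilon \leq x \leq \Lambda.
\]
This single formula does most of the work. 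Setting $x=\varepsilon$ collapses the integral to zero, giving $u_0'(\varepsilon)=0$. For $x>\varepsilon$, because $u_0\in V$ forces $u_0(y)\geq w(y)>0$, the integrand is strictly positive on $[\varepsilon,\,x]$, so $u_0'(x)<0$; integrating then shows that $u_0$ is strictly decreasing on $[\varepsilon,\,\Lambda]$.

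Finally I would obtain $u_0\in C^\infty$ by bootstrapping on the regularity. Suppose $u_0\in C^k$ for some $k\geq 0$. Then the quotient $\frac{y\,u_0(y)}{y+u_0(y)^2}$ is $C^k$ in $y$ (the denominator never vanishing), so $F$ and $G$ are $C^{k+1}$, whence $u_0=\frac{\lambda}{4}(F/x+G)$ is $C^{k+1}$ since $1/x$ is smooth on $[\varepsilon,\,\Lambda]$. Starting from $k=0$ and iterating shows $u_0\in C^k$ for every $k$, that is, $u_0\in C^\infty[\varepsilon,\,\Lambda]$.

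I expect the only delicate point --- and it is a mild one --- to be justifying the differentiation leading to the displayed formula and the inductive gain of one derivative in the bootstrapping step, both of which rest on the fact that $y+u_0(y)^2$ stays bounded away from $0$. Once that is in hand, the cancellation of the boundary terms is the sole genuine observation, and the vanishing-derivative and monotonicity claims then fall out immediately from the sign and the endpoint value of the integral in the formula for $u_0'$.
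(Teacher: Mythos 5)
Your proposal is correct and follows essentially the same route as the paper: both differentiate the split form \eqref{eq:operatoraa} of the fixed-point identity, arrive at the same formula $u_0'(x)=-\frac{\lambda}{\,4x^2\,}\int_\varepsilon^x \frac{y\,u_0(y)}{\,y+u_0(y)^2\,}\,dy$, and read off $u_0'(\varepsilon)=0$, strict monotonicity, and smoothness from its sign and structure. Your explicit boundary-term cancellation and the bootstrap induction for $C^\infty$ simply spell out steps the paper leaves implicit.
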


\begin{proof}
Each fixed point $u_0 \in V$ satisfies \eqref{eq:mnequation}, i.e.,
\begin{eqnarray*}
u_0(x)&=&\frac{\,\lambda\,}{2} \int_{\varepsilon}^{\Lambda}
\frac{1}{\, y+x+|y-x| \,}\,\frac{y\, u_0(y)}{\, y+u_0(y)^2\,}\, dy \\
&=& \frac{\,\lambda\,}{4} \left\{
\frac{1}{\, x\,} \int_{\varepsilon}^x \frac{y\, u_0(y)}{\, y+u_0(y)^2\,}\, dy
+\int_x^{\Lambda} \frac{u_0(y)}{\, y+u_0(y)^2\,}\, dy \right\},
\quad \varepsilon \leq x \leq \Lambda.
\end{eqnarray*}
Note that $u_0 \in V$ is continuous and $u_0(x)>0$ at each $x \in [\varepsilon,\,\Lambda]$. We then regard the integrals above as the Riemann integrals. Then $u_0$ is differentiable on $[\varepsilon,\,\Lambda]$:
\[
u_0'(x)=-\frac{\lambda}{\, 4x^2\,} \int_{\varepsilon}^x
\frac{y\, u_0(y)}{\, y+u_0(y)^2\,}\, dy \leq 0 \,, \qquad \varepsilon \leq x \leq \Lambda.
\]
Note that the equality $\displaystyle{ u_0'(x)=0 }$ holds at $x=\varepsilon$ only. We thus see that $u_0'(\varepsilon)=0$, that $u_0$ is strictly decreasing on $[\varepsilon,\, \Lambda]$, and that $u_0$ is infinitely differentiable on $[\varepsilon,\,\Lambda]$.
\end{proof}

\begin{lemma}
Let $u_0  \in V$ be a fixed point of the operator $A: V \to V$ given by Lemma \ref{lm:fixedpt}. If $u_0(x) \leq \sqrt{x}$ at all $x \in [\varepsilon,\, \Lambda]$, then $A: V \to V$ has a unique fixed point $u_0 \in V$.
\end{lemma}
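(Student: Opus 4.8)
The plan is to argue by contradiction. Suppose that, in addition to $u_0$, the operator $A$ has a second fixed point $u_1\in V$, and set $\phi:=u_1-u_0\in C[\varepsilon,\Lambda]$; the goal is to show $\phi\equiv0$. First I would subtract the two fixed-point equations. Writing $\tfrac{1}{\,y+x+|y-x|\,}=\tfrac{1}{\,2\max(x,y)\,}$ and using the algebraic identity
\[
\frac{u_1(y)}{\,y+u_1(y)^2\,}-\frac{u_0(y)}{\,y+u_0(y)^2\,}
=\frac{\bigl(u_1(y)-u_0(y)\bigr)\bigl(y-u_0(y)u_1(y)\bigr)}
{\bigl(y+u_0(y)^2\bigr)\bigl(y+u_1(y)^2\bigr)},
\]
I would express $\phi$ as a linear integral transform of itself,
\[
\phi(x)=\frac{\,\lambda\,}{4}\int_{\varepsilon}^{\Lambda}
\frac{y}{\,\max(x,y)\,}\,\sigma(y)\,\phi(y)\,dy,
\qquad
\sigma(y)=\frac{\,y-u_0(y)u_1(y)\,}
{\bigl(y+u_0(y)^2\bigr)\bigl(y+u_1(y)^2\bigr)}.
\]

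A direct estimate of the secant factor shows $|\sigma(y)|\le 1/y$ for every $y$: split into the case $u_0u_1\le y$, where the numerator is $\le y$ while each denominator factor is $\ge y$, and the case $u_0u_1>y$, where the arithmetic--geometric mean inequality $y+u_i^2\ge 2u_i\sqrt{y}$ gives the sharper $|\sigma|\le 1/(4y)$. The role of the hypothesis $u_0(x)\le\sqrt{x}$ is not in this magnitude bound but in the sign and branch structure: it places $u_0(y)$ on the interval $[0,\sqrt{y}]$ on which $t\mapsto t/(y+t^2)$ is increasing, so that $\sigma$ is the secant slope of a monotone nonlinearity. Without it, $u_0$ could sit on the decreasing branch and a genuinely different fixed point would not be excluded; with it, $\phi$ solves a linear problem with a controllable coefficient, and differentiating the relation above yields the Sturm--Liouville form
\[
\bigl(x^2\phi'(x)\bigr)'+\frac{\,\lambda\,}{4}\,x\,\sigma(x)\,\phi(x)=0,
\qquad \phi'(\varepsilon)=0,\qquad \Lambda\phi'(\Lambda)+\phi(\Lambda)=0,
\]
the two boundary conditions being inherited from \eqref{eq:operatoraa} exactly as in the earlier computation giving $u_0'(\varepsilon)=0$.

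The main obstacle is the final quantitative step. Inserting $|\sigma|\le 1/y$ into the integral relation yields only $\|\phi\|_{C[\varepsilon,\Lambda]}\le\frac{\lambda}{4}\bigl(1+\ln(\Lambda/\varepsilon)\bigr)\|\phi\|_{C[\varepsilon,\Lambda]}$, and since \eqref{eq:varepsilonlambda} forces $\Lambda/\varepsilon\ge 16$, this constant exceeds $1$; hence a bare Banach contraction in the supremum norm cannot close the argument, and the averaging structure of $A$ must be exploited instead of pointwise bounds. The route I would take is the energy identity obtained by multiplying the Sturm--Liouville equation by $\phi$ and integrating by parts, namely
\[
\int_{\varepsilon}^{\Lambda}x^2\phi'(x)^2\,dx+\Lambda\,\phi(\Lambda)^2
=\frac{\,\lambda\,}{4}\int_{\varepsilon}^{\Lambda}x\,\sigma(x)\,\phi(x)^2\,dx,
\]
in which the boundary term $\Lambda\phi(\Lambda)^2$ enters with a favorable sign and $x\sigma\le1$. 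The hard part will be the coercivity estimate: one must show, using $u_0\le\sqrt{x}$ together with the cutoff restriction \eqref{eq:varepsilonlambda}, that the right-hand side cannot reach the left-hand side unless $\phi\equiv0$. This is a weighted Hardy/Poincar\'e inequality comparing the form $\int x^2\phi'^2+\Lambda\phi(\Lambda)^2$ with $\tfrac{\lambda}{4}\int x\sigma\phi^2$, and pinning down its sharp constant---equivalently, ruling out an oscillatory second solution of the boundary-value problem---is where I expect essentially all of the difficulty to lie.
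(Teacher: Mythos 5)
Your reductions are all correct as far as they go: the linear integral relation for $\phi=u_1-u_0$, the secant-slope bound $|\sigma(y)|\le 1/y$, the Sturm--Liouville form with the boundary conditions $\phi'(\varepsilon)=0$ and $\Lambda\phi'(\Lambda)+\phi(\Lambda)=0$, and the energy identity all check out. The genuine gap is the step you defer to the end, and it cannot be filled in the form you propose. If you retain from $\sigma$ only the magnitude bound $x\,\sigma(x)\le 1$, then concluding $\phi\equiv 0$ from the energy identity requires the weighted Poincar\'e inequality
\begin{equation*}
\int_{\varepsilon}^{\Lambda}x^{2}\phi'(x)^{2}\,dx+\Lambda\,\phi(\Lambda)^{2}
\;\ge\;\frac{\lambda}{4}\int_{\varepsilon}^{\Lambda}\phi(x)^{2}\,dx ,
\end{equation*}
with strict inequality when $\phi\not\equiv 0$. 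This inequality is false throughout the regime of Theorem \ref{thm:fxd-pt-A}. Test it with $\phi(x)=x^{-1/2}$: the left side equals $\tfrac14\ln(\Lambda/\varepsilon)+1$ and the right side equals $\tfrac{\lambda}{4}\ln(\Lambda/\varepsilon)$, so the inequality forces $\lambda\le 1+4/\ln(\Lambda/\varepsilon)$. But \eqref{eq:varepsilonlambda} gives $\Lambda/\varepsilon\ge 16$, so $1+4/\ln(\Lambda/\varepsilon)\le 1+4/\ln 16<2.45$; and in the remaining window $2<\lambda<2.45$, condition \eqref{eq:varepsilonlambda} forces $\Lambda/\varepsilon>130$, which pushes $1+4/\ln(\Lambda/\varepsilon)$ below $2$. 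Hence the needed inequality fails for every $\lambda>2$ admitted by the theorem. This is not a defect of the test function but of the strategy: for $\ln(\Lambda/\varepsilon)$ large, the bottom of the spectrum of $-(x^{2}\phi')'$ under these boundary conditions lies near $1/4$, while $\lambda/4>1/2$. In the strong-coupling regime the linearized (secant) operator need not be spectrally small --- that is precisely why a nonzero fixed point exists at all --- so no argument that uses only the size of $\sigma$ can deliver uniqueness. You observed this obstruction yourself for the sup-norm contraction; it survives, unchanged, in the energy norm.

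The paper closes the argument with exactly the structure that your linearization discards: strict sublinearity of $s\mapsto s/(y+s^{2})$ along rays. Given a second fixed point $v_0\in V$, both fixed points lie between $w>0$ and $\lambda\sqrt{\Lambda}/4$, so there exist $t\in(0,1)$ and a point $x_0$ with $u_0\ge t\,v_0$ on $[\varepsilon,\Lambda]$ and $u_0(x_0)=t\,v_0(x_0)$ (if instead $u_0\ge v_0$ everywhere, swap the roles, which is legitimate since then $v_0\le u_0\le\sqrt{x}$). The hypothesis $u_0\le\sqrt{x}$ gives $y\ge u_0(y)\,t\,v_0(y)$, so, as in Lemma \ref{lm:important},
\begin{equation*}
\frac{u_0(y)}{\,y+u_0(y)^{2}\,}\;\ge\;\frac{t\,v_0(y)}{\,y+t^{2}v_0(y)^{2}\,}
\;>\;t\,\frac{v_0(y)}{\,y+v_0(y)^{2}\,},
\end{equation*}
the last inequality being strict because $0<t<1$ and $v_0>0$. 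Applying the operator $A$ at $x_0$ then yields $u_0(x_0)>t\,Av_0(x_0)=t\,v_0(x_0)$, contradicting the touching condition. Your instinct about where $u_0\le\sqrt{x}$ enters (keeping the nonlinearity on its increasing branch) is exactly right, but the decisive input is the ray-wise strict concavity $F(tv)>t\,F(v)$, a nonlinear fact invisible to any estimate on the secant slope $\sigma$ alone; a Hardy-type inequality cannot substitute for it.
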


\begin{proof}
Let $v_0 \in V$ be another fixed point of $A: V \to V$. Then there are a number $t$ \   $(0<t<1)$ and a point $x_0 \in [\varepsilon,\,\Lambda]$ such that
\begin{equation}\label{eq:uzerovzero}
u_0(x) \geq t \, v_0(x) \quad (x \in [\varepsilon,\,\Lambda]) \quad \mbox{and} \quad u_0(x_0)=t \, v_0(x_0).
\end{equation}
Note that $\displaystyle{ t \, v_0(x) \leq u_0(x) \leq \sqrt{x} }$ at each fixed $x \in [\varepsilon,\, \Lambda]$. Note also that in case of $u_0(x) \geq v_0(x)$ for all $x \in [\varepsilon,\,\Lambda]$ one can change the roles of $u_0$ and $v_0$.

Since $y \geq u_0(y) \, t \, v_0(y)$ at each fixed $y \in [\varepsilon,\, \Lambda]$, an argument similar to that in the proof of Lemma \ref{lm:important} gives
\[
\frac{u_0(y)}{\, y+u_0(y)^2\,} \geq \frac{t\, v_0(y)}{\, y+t^2\, v_0(y)^2\,}.
\]
Therefore,
\begin{eqnarray*}
u_0(x_0) &=& \frac{\,\lambda\,}{2} \int_{\varepsilon}^{\Lambda}
\frac{1}{\, y+x_0+|y-x_0| \,}\,\frac{y\, u_0(y)}{\, y+u_0(y)^2\,}\, dy \\
&\geq& \frac{\,\lambda\,}{2} \int_{\varepsilon}^{\Lambda}
\frac{1}{\, y+x_0+|y-x_0| \,}\,\frac{y\, t\, v_0(y)}{\, y+t^2\, v_0(y)^2\,}\, dy \, .
\end{eqnarray*}
Hence,
\begin{eqnarray*}
\frac{\,\lambda\,}{2} \int_{\varepsilon}^{\Lambda}
\frac{1}{\, y+x_0+|y-x_0| \,}\,\frac{y\, t\, v_0(y)}{\, y+t^2\, v_0(y)^2\,}\, dy&>& t\,\frac{\,\lambda\,}{2} \int_{\varepsilon}^{\Lambda}
\frac{1}{\, y+x_0+|y-x_0| \,}\,\frac{y\, v_0(y)}{\, y+v_0(y)^2\,}\, dy \\
&=& t \, v_0(x_0),
\end{eqnarray*}
which contradicts $u_0(x_0)=t \, v_0(x_0)$ \   (see \eqref{eq:uzerovzero}).
\end{proof}

Our proof of Theorem \ref{thm:fxd-pt-A} is now complete.

\section{Proof of Theorem \ref{thm:fxd-pt-B}}

In this section we prove Theorem \ref{thm:fxd-pt-B} in a sequence of lemmas. So, throughout this section, we assume $0<\lambda<1$.

\smallskip

An immediate consequence of the definition of the nonlinear integral operator $B$ is the following.

\begin{lemma}\label{lm:four-one}
Let $W$ be as in \eqref{eq:setw}. If $\psi \in W$, then $B\psi(x) \geq 0$ at all $x \in [\varepsilon,\,\infty)$.
\end{lemma}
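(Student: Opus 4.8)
The plan is to prove nonnegativity directly from the definition \eqref{eq:operatorb} by checking that the integrand is a product of nonnegative quantities, so that integration and multiplication by the positive prefactor preserve the sign. Since the excerpt already grants that $B\psi(x)$ is well-defined at every $x \in [\varepsilon,\,\infty)$, I need not worry about convergence of the integral and may simply inspect the sign of each factor.

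First I would fix $\psi \in W$ and $x \in [\varepsilon,\,\infty)$, and examine the prefactor $\frac{\lambda}{2}\sqrt{x+\varepsilon}$. Because $0<\lambda<1$ and $x+\varepsilon \geq 2\varepsilon>0$, this prefactor is strictly positive. Then I would turn to the integrand and treat its four factors in turn: the kernel $\frac{1}{y+x+|y-x|}$ is positive since $y+x>0$ and $|y-x|\geq 0$; the factor $\frac{y}{\sqrt{y+\varepsilon}}$ is positive because $y\geq\varepsilon>0$; the numerator $\psi(y)$ is nonnegative by the very definition of $W$ in \eqref{eq:setw}; and the denominator $y+\frac{\psi(y)^2}{y+\varepsilon}$ is strictly positive, being the sum of $y\geq\varepsilon>0$ and a nonnegative term.

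Assembling these observations, the integrand is nonnegative for every $y \in [\varepsilon,\,\infty)$, whence the integral is nonnegative; multiplying by the positive prefactor then yields $B\psi(x)\geq 0$. Since $x$ was arbitrary, the claim follows at all $x\in[\varepsilon,\,\infty)$. There is essentially no obstacle here: the only point requiring a word of care is the positivity of the denominator, which could in principle threaten to vanish, but it cannot, because its leading term $y$ is bounded below by $\varepsilon>0$ uniformly in $y$. This is why the lemma is indeed an immediate consequence of the definition.
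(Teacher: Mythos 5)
Your proof is correct and matches the paper's treatment: the paper states this lemma as ``an immediate consequence of the definition of the nonlinear integral operator $B$'' with no written proof, and your factor-by-factor sign check is exactly the verification that remark leaves implicit. Nothing is missing or different in substance; you have simply written out the details.
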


\begin{lemma}\label{lm:four-two}
If $\psi \in W$, then $B\psi \in C[\varepsilon,\, \infty)$.
\end{lemma}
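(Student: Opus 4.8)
The plan is to prove continuity of $B\psi$ at each fixed $x_0\in[\varepsilon,\infty)$; since $x_0$ is arbitrary this yields $B\psi\in C[\varepsilon,\infty)$. Throughout I would restrict to $x$ in the bounded neighbourhood $|x-x_0|\le 1$, on which $\sqrt{x+\varepsilon}$ stays bounded, say by $C_0=\sqrt{x_0+1+\varepsilon}$. Abbreviating $K(x,y)=1/(y+x+|y-x|)$ and $h(y)=\dfrac{y}{\sqrt{y+\varepsilon}}\,\dfrac{\psi(y)}{\,y+\psi(y)^2/(y+\varepsilon)\,}\ge 0$, so that $B\psi(x)=\frac{\lambda}{2}\sqrt{x+\varepsilon}\int_\varepsilon^\infty K(x,y)\,h(y)\,dy$, and writing $M=\sup_{y\ge\varepsilon}\psi(y)<\infty$ (finite since $\psi\in B^0[\varepsilon,\infty)$), I would first record the decay estimate that tames the improper integral: using $y+\psi(y)^2/(y+\varepsilon)\ge y$ and $\psi(y)\le M$,
\[
K(x,y)\,h(y)\le \frac{M}{\,2\,y^{3/2}\,}\qquad (y\ge x),
\]
so the integrand is dominated on the tail by an integrable function of $y$.

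The central difficulty is that the integration domain is unbounded, so I cannot simply invoke uniform continuity of the kernel $(x,y)\mapsto K(x,y)$ over $[\varepsilon,\infty)^2$ as was done on the compact square in the proof of Lemma \ref{lm:equicont}. To circumvent this I would fix $\eta>0$ and split the integral at a large cutoff $N>x_0+1$ into $\int_\varepsilon^N$ and $\int_N^\infty$. The crucial observation is that for $y\ge N$ one has $y>x$ for every $x$ in the neighbourhood, so $y+x+|y-x|=2y$ is independent of $x$; hence on the tail the only $x$-dependence sits in the prefactor $\sqrt{x+\varepsilon}$, and
\[
\Bigl|\sqrt{x+\varepsilon}-\sqrt{x_0+\varepsilon}\Bigr|\int_N^\infty\frac{h(y)}{2y}\,dy\le 2C_0\,\frac{M}{\sqrt{N}}.
\]
I would then choose $N$ so large that this tail contribution is below $\eta/2$, uniformly for $x$ in the neighbourhood.

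For the remaining piece over the compact interval $[\varepsilon,N]$ I would proceed as in Lemma \ref{lm:equicont}. Adding and subtracting $\sqrt{x_0+\varepsilon}\int_\varepsilon^N K(x,y)h(y)\,dy$ splits it into a term carrying the factor $|\sqrt{x+\varepsilon}-\sqrt{x_0+\varepsilon}|$ against the bounded integral $\int_\varepsilon^N K(x,y)h(y)\,dy$, and a term $\sqrt{x_0+\varepsilon}\int_\varepsilon^N|K(x,y)-K(x_0,y)|\,h(y)\,dy$; here $K$ is uniformly continuous on $[\varepsilon,N]^2$ (its denominator is bounded below by $2\varepsilon$), while $h$ is integrable on $[\varepsilon,N]$ because $h(y)\le M/\sqrt{y+\varepsilon}$. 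Both terms tend to $0$ as $x\to x_0$, so this piece can be made smaller than $\eta/2$ once $|x-x_0|$ is sufficiently small. Combining the two estimates gives $|B\psi(x)-B\psi(x_0)|<\eta$, proving continuity at $x_0$.

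I expect the main obstacle to be precisely the unbounded domain, and the key idea resolving it is the dual role of the cutoff $N$: it confines the genuinely $x$-dependent part of the kernel to the compact square $[\varepsilon,N]^2$, where uniform continuity applies exactly as in Lemma \ref{lm:equicont}, while on the tail $y\ge N$ the kernel degenerates to the $x$-free value $1/(2y)$, whose contribution is controlled solely by the $y^{-3/2}$ decay coming from the boundedness of $\psi$.
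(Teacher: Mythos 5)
Your proof is correct and follows essentially the same strategy as the paper: fix $x_0$, restrict $x$ to a bounded neighbourhood, split the improper integral at a large cutoff, control the tail uniformly in $x$ via the $y^{-3/2}$ decay of the integrand, and apply uniform continuity of the kernel on the remaining compact square. The only cosmetic differences are that the paper keeps the prefactor $\sqrt{x+\varepsilon}$ glued to the kernel (using uniform continuity of $(x,y)\mapsto \sqrt{x+\varepsilon}/(y+x+|y-x|)$ on $[\varepsilon,2x_0]\times[\varepsilon,R_0]$) and bounds the tail by the triangle inequality rather than by your observation that the kernel equals $1/(2y)$ there.
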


\begin{proof}
Let $x_0 \in [\varepsilon,\,\infty)$ and let $x \in [\varepsilon,\, 2x_0]$. Then, for $\psi \in W$,
\begin{eqnarray*}
\left| B\psi(x)-B\psi(x_0) \right| &\leq& \frac{\,\lambda\,}{2}
\int_{\varepsilon}^{\infty} \left|
\frac{ \sqrt{x+\varepsilon} }{\, y+x+|y-x| \,}
-\frac{ \sqrt{x_0+\varepsilon} }{\, y+x_0+|y-x_0| \,} \right|
\, \frac{\psi(y)}{\, \sqrt{y+\varepsilon}\,} \, dy \\
&=& I_1+I_2\, ,
\end{eqnarray*}
where
\begin{eqnarray*}
I_1 &=& \frac{\,\lambda\,}{2} \int_{\varepsilon}^R \left|
\frac{ \sqrt{x+\varepsilon} }{\, y+x+|y-x| \,}
-\frac{ \sqrt{x_0+\varepsilon} }{\, y+x_0+|y-x_0| \,} \right|
\, \frac{\psi(y)}{\, \sqrt{y+\varepsilon}\,} \, dy\, , \\
\noalign{\vskip2mm}
I_2 &=& \frac{\,\lambda\,}{2} \int_R^{\infty} \left|
\frac{ \sqrt{x+\varepsilon} }{\, y+x+|y-x| \,}
-\frac{ \sqrt{x_0+\varepsilon} }{\, y+x_0+|y-x_0| \,} \right|
\, \frac{\psi(y)}{\, \sqrt{y+\varepsilon}\,} \, dy\, .
\end{eqnarray*}
Let $\left\| \cdot \right\|_{B^0[\varepsilon,\, \infty)}$ denote the norm of the Banach space $B^0[\varepsilon,\, \infty)$. A straightforward calculation then gives for an arbitrary $\varepsilon_1>0$,
\begin{eqnarray*}
I_2 &\leq& \lambda \, \| \psi \|_{B^0[\varepsilon,\, \infty)} \int_R^{\infty}
\frac{\, \sqrt{2x_0+\varepsilon}\,}{\, y+\varepsilon \,}
\, \frac{1}{\, \sqrt{y+\varepsilon}\,} \, dy \\
&<& 2\lambda \, \| \psi \|_{B^0[\varepsilon,\, \infty)}
\sqrt{ \frac{\, 2x_0+\varepsilon\,}{R} } \\
&<& \frac{\, \varepsilon_1\,}{2}\, .
\end{eqnarray*}
Here, $\displaystyle{ R>\left( \frac{\, 4\lambda \, \| \psi \|_{B^0[\varepsilon,\, \infty)} \sqrt{2x_0+\varepsilon}\,}{\varepsilon_1} \right)^2 }$. We choose an $R$ satifying this inequality and fix it. Let us denote it by $R_0$.

On the other hand, the function $\displaystyle{ (x,\, y) \mapsto \frac{ \sqrt{x+\varepsilon} }{\, y+x+|y-x| \,} }$ is uniformly continuous on $[\varepsilon,\, 2x_0] \times [\varepsilon,\, R_0]$, and hence there is a $\delta$ such that
\[
\left| \frac{ \sqrt{x+\varepsilon} }{\, y+x+|y-x| \,}
-\frac{ \sqrt{x_0+\varepsilon} }{\, y+x_0+|y-x_0| \,} \right| <
\frac{\varepsilon_1}{\, 2\lambda \, \| \psi \|_{B^0[\varepsilon,\, \infty)}\sqrt{R_0}\,}\, , \qquad |x-x_0| < \delta.
\]
Therefore, $I_1<\varepsilon_1/2$. Thus \   $\displaystyle{
\left| B\psi(x)-B\psi(x_0) \right| < \varepsilon_1\, ,
\quad |x-x_0| < \delta }$.
\end{proof}

\begin{lemma}\label{lm:four-three}
Let $\psi \in W$. Then $\left\| B\psi \right\|_{B^0[\varepsilon,\, \infty)} \leq \lambda\,\left\| \psi \right\|_{B^0[\varepsilon,\, \infty)}$. Consequently, $B\psi$ is bounded.
\end{lemma}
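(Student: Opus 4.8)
The plan is to bound $B\psi(x)$ pointwise by $\lambda\left\|\psi\right\|_{B^0[\varepsilon,\,\infty)}$ uniformly in $x$ and then pass to the supremum. Since $\psi\in W$ means $\psi\geq0$, the term $\psi(y)^2/(y+\varepsilon)$ in the denominator of the integrand is nonnegative, so first I would discard it and use the crude bound $\psi(y)\leq\left\|\psi\right\|_{B^0[\varepsilon,\,\infty)}$ to get
\[
\frac{y}{\,\sqrt{y+\varepsilon}\,}\,\frac{\psi(y)}{\,y+\psi(y)^2/(y+\varepsilon)\,}\leq\frac{\psi(y)}{\,\sqrt{y+\varepsilon}\,}\leq\frac{\left\|\psi\right\|_{B^0[\varepsilon,\,\infty)}}{\,\sqrt{y+\varepsilon}\,}.
\]
This yields $B\psi(x)\leq\frac{\lambda}{2}\left\|\psi\right\|_{B^0[\varepsilon,\,\infty)}\,G(x)$, where $G(x):=\sqrt{x+\varepsilon}\int_{\varepsilon}^{\infty}(y+x+|y-x|)^{-1}(y+\varepsilon)^{-1/2}\,dy$, and the whole matter reduces to showing $G(x)\leq2$ for every $x\geq\varepsilon$.

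Next, using $y+x+|y-x|=2\max(x,y)$ I would split the integral at $y=x$, writing $G(x)=\frac{\sqrt{x+\varepsilon}}{2x}\int_\varepsilon^x(y+\varepsilon)^{-1/2}\,dy+\frac{\sqrt{x+\varepsilon}}{2}\int_x^\infty y^{-1}(y+\varepsilon)^{-1/2}\,dy$. Both integrals are elementary: the first equals $2(\sqrt{x+\varepsilon}-\sqrt{2\varepsilon})$, and the second, after the substitution $z=\sqrt{y+\varepsilon}$, equals $\varepsilon^{-1/2}\ln\frac{\sqrt{x+\varepsilon}+\sqrt{\varepsilon}}{\sqrt{x+\varepsilon}-\sqrt{\varepsilon}}$. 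Introducing $s=\sqrt{\varepsilon/(x+\varepsilon)}$, so that the range $x\geq\varepsilon$ corresponds exactly to $0<s\leq1/\sqrt{2}$, collapses $G(x)$ to the single-variable expression $g(s)=\frac{1-\sqrt{2}\,s}{1-s^2}+\frac{\operatorname{artanh} s}{s}$.

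The crux, and the main obstacle, is the inequality $g(s)\leq2$ on $(0,1/\sqrt{2}]$. I would stress that the two terms cannot be bounded separately by $1$: the tail integral produces the logarithm, and $\operatorname{artanh} s>s$ forces the second term to exceed $1$ for small $x$; the estimate survives only because the near-diagonal term $\frac{1-\sqrt{2}\,s}{1-s^2}$ is correspondingly smaller than $1$ there, so a combined argument is unavoidable. Rearranging, $g(s)\leq2$ is equivalent to $\frac{\operatorname{artanh} s-s}{s}\leq\frac{s(\sqrt{2}-s)}{1-s^2}$. I would bound the left-hand side via the power series $\operatorname{artanh} s-s=\sum_{n\geq1}\frac{s^{2n+1}}{2n+1}\leq\frac{1}{3}\sum_{n\geq1}s^{2n+1}=\frac{s^3}{3(1-s^2)}$, which reduces the claim to $\frac{s^2}{3}\leq s(\sqrt{2}-s)$, i.e. $\frac{4s}{3}\leq\sqrt{2}$, and this holds for all $s\leq1/\sqrt{2}$ with room to spare. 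The reason the sharp constant matters is that $\sup_s g(s)=2$, attained only as $x\to\infty$; hence the factor $\lambda$ in the lemma is sharp and no cruder estimate (which would give $\sqrt{2}\,\lambda$ or worse) would suffice for the later uniqueness argument, where the hypothesis $\lambda<1$ is the only slack available.

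Finally, taking the supremum over $x\in[\varepsilon,\,\infty)$ gives $\left\|B\psi\right\|_{B^0[\varepsilon,\,\infty)}\leq\lambda\left\|\psi\right\|_{B^0[\varepsilon,\,\infty)}$, and boundedness of $B\psi$ is then immediate (its continuity having already been established in Lemma \ref{lm:four-two}), which completes the proof.
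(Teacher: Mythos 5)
Your proof is correct, and its skeleton coincides with the paper's: drop the nonnegative term $\psi(y)^2/(y+\varepsilon)$ from the denominator, pull out $\left\| \psi \right\|_{B^0[\varepsilon,\, \infty)}$, split the integral at $y=x$ using $y+x+|y-x|=2\max(x,y)$, evaluate both pieces in closed form, and reduce the lemma to a one-variable inequality with the sharp constant $2$. The genuine difference is in how that inequality is established. The paper first enlarges the integration domain from $[\varepsilon,\infty)$ to $[0,\infty)$, which turns your $\sqrt{2\varepsilon}$ into $\sqrt{\varepsilon}$; in your variable $s=\sqrt{\varepsilon/(x+\varepsilon)}$ its target inequality (Lemma \ref{lm:xepsilon}) reads $\frac{1}{1+s}+\frac{\operatorname{artanh}s}{s}<2$, marginally weaker than your $\frac{1-\sqrt{2}\,s}{1-s^2}+\frac{\operatorname{artanh}s}{s}\leq 2$ since $\frac{1-\sqrt{2}\,s}{1-s^2}\leq\frac{1-s}{1-s^2}=\frac{1}{1+s}$. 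The paper then proves its version by a calculus argument: setting $\xi=x/\varepsilon$, it defines $f(\xi)=\frac{1}{\xi+1+\sqrt{\xi+1}}+\frac{1}{\sqrt{\xi+1}}-\ln\frac{\sqrt{\xi+1}+1}{\sqrt{\xi}}$, checks $f(1)>0$ and $\lim_{\xi\to\infty}f(\xi)=0$, computes $f'$, and concludes $f>0$ --- a step that tacitly relies on $f'$ changing sign exactly once on $(1,\infty)$. Your power-series bound $\operatorname{artanh}s-s\leq\frac{s^3}{3(1-s^2)}$ replaces this derivative analysis with a two-line algebraic verification, closing with $\frac{4s}{3}\leq\sqrt{2}$, which holds up to $s=\frac{3\sqrt{2}}{4}$, comfortably beyond the needed range $s\leq\frac{1}{\sqrt{2}}$; it is more elementary and makes fully explicit what the paper leaves implicit. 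Your sharpness commentary is also accurate: both bounds degenerate to $2$ only as $x\to\infty$ (i.e.\ $s\to 0^+$), and the constant $\lambda$, with no extra factor, is exactly what the subsequent Banach fixed-point argument requires, since $\lambda<1$ is the only slack available.
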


\begin{proof}
\[
B\psi(x) \leq \frac{\,\lambda\,}{2}
\left\| \psi \right\|_{B^0[\varepsilon,\, \infty)}
\sqrt{x+\varepsilon} \int_{\varepsilon}^{\infty}
\frac{dy}{\, \left( y+x+|y-x| \right)\sqrt{y+\varepsilon}\,} \, .
\]
Here, a straightforward calculation gives
\begin{eqnarray*}
\int_{\varepsilon}^{\infty} \frac{dy}{\, \left( y+x+|y-x| \right)\sqrt{y+\varepsilon}\,}
&\leq& \int_0^{\infty} \frac{dy}{\, \left( y+x+|y-x| \right)\sqrt{y+\varepsilon}\,} \\
&=& \frac{1}{\,  2x\,} \int_0^x \frac{dy}{\, \sqrt{y+\varepsilon}\,}+
 \frac{1}{\,  2\, } \int_x^{\infty} \frac{dy}{\, y \,\sqrt{y+\varepsilon}\,} \\
&=& \frac{1}{\, \sqrt{x+\varepsilon}+\sqrt{\varepsilon} \,}
+\frac{1}{\, \sqrt{\varepsilon} \,} \ln
\left( \sqrt{ 1+\frac{\, \varepsilon \,}{x}}+
\sqrt{\frac{\, \varepsilon \,}{x}} \right).
\end{eqnarray*}
The result thus follows from Lemma \ref{lm:xepsilon} just below.
\end{proof}

\begin{lemma}\label{lm:xepsilon}
\[
\sqrt{x+\varepsilon} \left\{ 
\frac{1}{\, \sqrt{x+\varepsilon}+\sqrt{\varepsilon} \,}
+\frac{1}{\, \sqrt{\varepsilon} \,} \ln
\left( \sqrt{ 1+\frac{\, \varepsilon \,}{x}}+
\sqrt{\frac{\, \varepsilon \,}{x}} \right) \right\} < 2 \qquad (x \geq \varepsilon).
\]
\end{lemma}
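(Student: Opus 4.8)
The plan is to remove the two parameters by the scaling $t=\varepsilon/x$. Since $x\ge\varepsilon>0$ we have $0<t\le1$, and writing $\sqrt{x+\varepsilon}=\sqrt x\,\sqrt{1+t}$ and $\sqrt\varepsilon=\sqrt x\,\sqrt t$ the factor $\sqrt x$ cancels throughout, so the left-hand side becomes the single-variable expression
\[
f(t)=\frac{\sqrt{1+t}}{\,\sqrt{1+t}+\sqrt t\,}+\sqrt{\frac{1+t}{t}}\,\ln\!\bigl(\sqrt{1+t}+\sqrt t\bigr),\qquad 0<t\le1.
\]
The task is then to prove $f(t)<2$ on $(0,1]$. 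The delicate point, and the reason the inequality is stated with a strict $<$, is that this bound is sharp: as $t\to0^+$ (that is, $x\to\infty$) the first summand tends to $1$, and since $\ln(\sqrt{1+t}+\sqrt t)$ behaves like $\sqrt t$, the second summand also tends to $1$, so $f(t)\to2$. No crude estimate can succeed; a sharp, carefully chosen majorant is required.

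The key step I would use is the elementary bound
\[
\ln\!\bigl(\sqrt t+\sqrt{1+t}\bigr)<\sqrt t\qquad(t>0),
\]
which follows at once by setting $\phi(z)=z-\ln\!\bigl(z+\sqrt{1+z^2}\bigr)$ and noting $\phi(0)=0$ and $\phi'(z)=1-(1+z^2)^{-1/2}>0$ for $z>0$, applied at $z=\sqrt t$. Because the factor $\sqrt{(1+t)/t}$ multiplying the logarithm is positive, this yields
\[
f(t)<\frac{\sqrt{1+t}}{\,\sqrt{1+t}+\sqrt t\,}+\sqrt{\frac{1+t}{t}}\,\sqrt t=\frac{\sqrt{1+t}}{\,\sqrt{1+t}+\sqrt t\,}+\sqrt{1+t}=:h(t),
\]
so it remains to check $h(t)\le2$ for $0<t\le1$.

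For this last step, writing $a=\sqrt{1+t}$ and $b=\sqrt t$ (so that $a^2=1+b^2$) and clearing the denominator gives
\[
2-h(t)=\frac{\,a+2b-a^2-ab\,}{a+b}=\frac{(1-b)(a-1+b)}{a+b},
\]
the factorization being a direct consequence of $a^2=1+b^2$. For $t\in(0,1]$ one has $1-b=1-\sqrt t\ge0$, while $a-1+b=\sqrt{1+t}-1+\sqrt t>0$ and $a+b>0$; hence $2-h(t)\ge0$, i.e.\ $h(t)\le2$. Combining this with the strict bound $f(t)<h(t)$ gives $f(t)<h(t)\le2$, which is the assertion. I expect the main obstacle to be precisely the sharpness noted above: the majorant $h$ satisfies $h(1)=2$ with equality, so only the \emph{strictness} of the logarithmic inequality (valid for every $t>0$, and in particular at $t=1$) rescues the endpoint $x=\varepsilon$. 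Selecting a majorant that is simultaneously $\le2$ on all of $(0,1]$ and simple enough to factor cleanly is the one genuinely nonroutine choice in the argument; everything else is a direct computation.
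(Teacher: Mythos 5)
Your proof is correct, and it takes a genuinely different route from the paper's. The paper rescales with $\xi=x/\varepsilon\ge 1$ (the reciprocal of your $t$) and establishes the lemma by proving positivity of the single difference function
\[
f(\xi)=\frac{1}{\,\xi+1+\sqrt{\xi+1}\,}+\frac{1}{\,\sqrt{\xi+1}\,}-\ln\frac{\,\sqrt{\xi+1}+1\,}{\sqrt{\xi}}, \qquad \xi\ge 1,
\]
via a global calculus argument: $f(1)=1-\ln(\sqrt{2}+1)>0$, $f(\xi)\to 0$ as $\xi\to\infty$, and the numerator $1-\sqrt{\xi+1}\,(\xi-1)$ of $f'(\xi)$ changes sign exactly once, so $f$ first increases and then decreases to its zero limit, hence stays positive; rearranging $f>0$ yields the bound $2$. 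You instead split the work into two independent pieces: the universal inequality $\ln\bigl(\sqrt{t}+\sqrt{1+t}\bigr)<\sqrt{t}$ (that is, $\operatorname{arcsinh}z<z$, with a one-line monotonicity proof), and a purely algebraic bound $h(t)\le 2$, verified by the factorization $2-h=(1-b)(a-1+b)/(a+b)$ with $a=\sqrt{1+t}$, $b=\sqrt{t}$. What your decomposition buys is that it replaces the paper's delicate step --- tracking the sign change of $f'$ together with both boundary values to conclude global positivity --- by two routine checks; what it costs is that the majorant must be chosen sharply, and your accounting of this is exactly right: since $h(1)=2$, strictness at the endpoint $x=\varepsilon$ comes solely from the strict arcsinh bound, and since your $f(t)\to 2$ as $t\to 0^+$ (equivalently the paper's $f(\xi)\to 0$ as $\xi\to\infty$), no lossy estimate could work anywhere. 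Both proofs thus exploit the same sharpness of the constant $2$, but yours is more elementary and easier to verify line by line, while the paper's treats the expression in one piece.
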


\begin{proof}
Consider the following function $f$ given by 
\[
f(\xi) = \frac{1}{\, \xi+1+\sqrt{\xi+1}\,}+\frac{1}{\,\sqrt{\xi+1}\,}
-\ln \frac{\,\sqrt{\xi+1}+1\,}{\sqrt{\xi}}, \quad \xi \geq 1.
\]
Then $f(1)=1-\ln(\sqrt{2}+1)>0$ and $\displaystyle{ \lim_{\xi \to \infty}f(\xi)=0 }$.
A straightforward calculation gives
\[
f'(\xi)=\frac{1-\sqrt{\xi+1}(\xi-1)}{\,\xi(\xi+1)^{3/2}(\sqrt{\xi+1}+1)^2},
\]
and hence $f(\xi)>0$ at all $\xi \geq 1$. Then
\[
\sqrt{\xi+1}\ln \frac{\,\sqrt{\xi+1}+1\,}{\sqrt{\xi}}<\frac{1}{\, \sqrt{\xi+1}+1\,}+1.
\]
Thus
\[
\frac{\sqrt{\xi+1}}{\, \sqrt{\xi+1}+1\,}+\sqrt{\xi+1}\ln \frac{\,\sqrt{\xi+1}+1\,}{\sqrt{\xi}}<2.
\]
Setting $\xi=x/\varepsilon$ proves the lemma.
\end{proof}

Lemmas \ref{lm:four-one}, \ref{lm:four-two} and \ref{lm:four-three} imply the following.

\begin{lemma}
If $\psi \in W$, then $B\psi \in W$.
\end{lemma}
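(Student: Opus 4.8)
The plan is to verify directly that $B\psi$ meets the two defining requirements of the set $W$ in \eqref{eq:setw}: first, that $B\psi$ belongs to the Banach space $B^0[\varepsilon,\,\infty)$ of bounded continuous functions, and second, that $B\psi(x) \geq 0$ at every $x \in [\varepsilon,\,\infty)$. Since each of these ingredients has already been isolated in a preceding lemma, the proof is purely an assembly step, and I would present it as such.

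First I would invoke Lemma \ref{lm:four-two} to conclude that $B\psi$ is continuous on $[\varepsilon,\,\infty)$. Next, Lemma \ref{lm:four-three} gives the bound $\| B\psi \|_{B^0[\varepsilon,\,\infty)} \leq \lambda \, \| \psi \|_{B^0[\varepsilon,\,\infty)}$; because $\psi \in W \subset B^0[\varepsilon,\,\infty)$ has finite norm, the right-hand side is finite, so $B\psi$ is bounded. These two facts together place $B\psi$ in $B^0[\varepsilon,\,\infty)$. Finally, Lemma \ref{lm:four-one} supplies the nonnegativity $B\psi(x) \geq 0$ for all $x \in [\varepsilon,\,\infty)$, which is the remaining condition in the definition of $W$. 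Combining the three, one obtains $B\psi \in W$, completing the argument.

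There is no genuine obstacle at this step: the analytic content has already been dispatched in the earlier lemmas, namely the continuity estimate split over $[\varepsilon,\,R_0]$ and $[R_0,\,\infty)$ in Lemma \ref{lm:four-two}, and the sharp integral bound of Lemma \ref{lm:xepsilon} that produces the clean factor $\lambda$ in Lemma \ref{lm:four-three}. The only point worth keeping in mind is that for the present membership claim the size of $\lambda$ is irrelevant — any finite $\lambda$ suffices to make $\| B\psi \|_{B^0[\varepsilon,\,\infty)}$ finite. The restriction $0<\lambda<1$ is not needed here; it will instead enter subsequently, where the contractive character of the bound $\| B\psi \|_{B^0[\varepsilon,\,\infty)} \leq \lambda \, \| \psi \|_{B^0[\varepsilon,\,\infty)}$ is exploited to force the unique fixed point $0 \in W$ of Theorem \ref{thm:fxd-pt-B}.
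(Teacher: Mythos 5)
Your proposal is correct and matches the paper exactly: the paper proves this lemma simply by stating that Lemmas \ref{lm:four-one}, \ref{lm:four-two} and \ref{lm:four-three} (nonnegativity, continuity, and the bound $\| B\psi \|_{B^0[\varepsilon,\, \infty)} \leq \lambda\,\| \psi \|_{B^0[\varepsilon,\, \infty)}$) together imply $B\psi \in W$, which is precisely your assembly. Your side observation that the restriction $0<\lambda<1$ is not needed for membership in $W$, only later for contractivity, is also accurate.
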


We now show that the nonlinear integral operator $B: W \to W$ is contractive. The following lemma is our key lemma in this section.

\begin{lemma}
Let $\psi,\, \varphi \in W$. Then
\[
\left\| B\psi-B\varphi \right\|_{B^0[\varepsilon,\, \infty)} \leq
\lambda \left\| \psi-\varphi \right\|_{B^0[\varepsilon,\, \infty)}\, .
\]
Consequently, $B: W \to W$ is contractive.
\end{lemma}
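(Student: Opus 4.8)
The plan is to reduce the claim to a pointwise Lipschitz estimate on the nonlinearity and then to recycle the integral bound already established in Lemmas \ref{lm:four-three} and \ref{lm:xepsilon}. First I would subtract the two defining integrals to write
\[
B\psi(x)-B\varphi(x)=\frac{\,\lambda\,}{2}\sqrt{x+\varepsilon}\int_\varepsilon^\infty \frac{1}{\,y+x+|y-x|\,}\,\frac{y}{\,\sqrt{y+\varepsilon}\,}\bigl[g(y,\psi(y))-g(y,\varphi(y))\bigr]\,dy,
\]
where I set $g(y,s)=\dfrac{s}{\,y+s^2/(y+\varepsilon)\,}=\dfrac{s(y+\varepsilon)}{\,y(y+\varepsilon)+s^2\,}$ for $s\ge 0$ and $y\ge\varepsilon$. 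The whole problem is thus to control $|g(y,\psi(y))-g(y,\varphi(y))|$ uniformly in $y$.

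Second, I would prove the key estimate $|\partial_s g(y,s)|\le 1/y$ for all $s\ge 0$. A direct differentiation gives $\partial_s g(y,s)=\dfrac{(y+\varepsilon)\{y(y+\varepsilon)-s^2\}}{\,\{y(y+\varepsilon)+s^2\}^2\,}$, and, writing $a=y(y+\varepsilon)$ and $b=y+\varepsilon$, the desired bound reduces to $a\,|a-s^2|\le (a+s^2)^2$, which is elementary to verify by splitting into the cases $s^2\le a$ and $s^2>a$. By the mean value theorem this yields the pointwise Lipschitz bound $|g(y,\psi(y))-g(y,\varphi(y))|\le \frac{1}{y}\,\|\psi-\varphi\|_{B^0[\varepsilon,\, \infty)}$ at every $y\ge\varepsilon$.

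Third, I would substitute this bound into the integral. The crucial cancellation is that the factor $y$ in the kernel's numerator absorbs the $1/y$ coming from the Lipschitz constant, leaving
\[
|B\psi(x)-B\varphi(x)|\le \frac{\,\lambda\,}{2}\,\|\psi-\varphi\|_{B^0[\varepsilon,\, \infty)}\,\sqrt{x+\varepsilon}\int_\varepsilon^\infty \frac{dy}{\,(y+x+|y-x|)\sqrt{y+\varepsilon}\,}.
\]
This is precisely the integral estimated in the proof of Lemma \ref{lm:four-three}, which together with Lemma \ref{lm:xepsilon} shows that $\sqrt{x+\varepsilon}$ times this integral is strictly less than $2$. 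Hence $|B\psi(x)-B\varphi(x)|\le \lambda\,\|\psi-\varphi\|_{B^0[\varepsilon,\, \infty)}$ at every $x$, and taking the supremum over $x$ gives the claimed bound; since $0<\lambda<1$, the operator $B$ is a contraction.

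The main obstacle is the derivative bound $|\partial_s g|\le 1/y$: everything else is either a formal manipulation or a direct appeal to the already-proved integral estimate. Once this pointwise Lipschitz constant is in hand the proof closes immediately, because the weight $y/\sqrt{y+\varepsilon}$ in the kernel is exactly compensated, so that the very same computation that bounded $\|B\psi\|$ in Lemma \ref{lm:four-three} applies verbatim to the difference $B\psi-B\varphi$.
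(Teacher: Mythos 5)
Your proposal is correct and follows essentially the same route as the paper: a pointwise Lipschitz estimate on the nonlinearity whose constant is exactly absorbed by the kernel weight, followed by the integral bound of Lemmas \ref{lm:four-three} and \ref{lm:xepsilon}. The only difference is cosmetic: the paper proves the equivalent $1$-Lipschitz bound for $X \mapsto yX/(y+X^2)$ (with $X=\psi(y)/\sqrt{y+\varepsilon}$) by factoring the difference and applying $2XY \leq X^2+Y^2$, whereas you obtain the same estimate for $s \mapsto g(y,s)$ via the derivative bound $|\partial_s g(y,s)|\le 1/y$ and the mean value theorem.
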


\begin{proof}
Let $X,\, Y \geq 0$. A straightforward calculation gives
\begin{eqnarray*}
\left| \frac{y\, X}{\, y+X^2\,}-\frac{y\, Y}{\, y+Y^2\,} \right| &=&
 y \left| X-Y \right| \frac{ \, |y-XY|\,}{\, (\, y+X^2\, )(\, y+Y^2\, )\,} \\
&\leq& y \left| X-Y \right| \frac{ y+2XY }{\, y^2+y(\, X^2+Y^2\, )\,} \\
&\leq& \left| X-Y \right|.
\end{eqnarray*}
Replacing $X$ by $\psi(y)/\sqrt{y+\varepsilon}$ and $Y$ by $\varphi(y)/\sqrt{y+\varepsilon}$ yields
\begin{eqnarray*}
\left| B\psi(x)-B\varphi(x) \right| &\leq& \frac{\,\lambda\,}{2}
 \sqrt{x+\varepsilon} \int_{\varepsilon}^{\infty}
 \frac{1}{\, y+x+|y-x| \,}\,
 \left| \frac{\psi(y)}{\, \sqrt{y+\varepsilon}\,}
 -\frac{\varphi(y)}{\, \sqrt{y+\varepsilon}\,} \right| \, dy \\
&\leq& \frac{\,\lambda\,}{2} \,
 \left\| \psi-\varphi \right\|_{B^0[\varepsilon,\, \infty)}
 \sqrt{x+\varepsilon} \int_{\varepsilon}^{\infty}
 \frac{dy}{\, \left( y+x+|y-x| \right)\sqrt{y+\varepsilon}\,} \\
&\leq& \lambda \left\| \psi-\varphi \right\|_{B^0[\varepsilon,\, \infty)}
\end{eqnarray*}
by Lemma \ref{lm:xepsilon}, from which the result follows.
\end{proof}

The Banach fixed-point theorem (see e.g. Zeidler \cite[p. 19]{zeidler}) thus implies the following.

\begin{lemma}
The nonlinear integral operator $B: W \to W$ has a unique fixed point $\psi_0 \in W$.
\end{lemma}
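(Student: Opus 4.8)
The plan is simply to invoke the Banach fixed-point theorem (contraction mapping principle), whose hypotheses are by now almost entirely in place. By the preceding lemma we already know that $B$ maps $W$ into itself and that $B$ is a contraction with contraction constant $\lambda$, where $0<\lambda<1$ by assumption. The only remaining hypothesis to verify is that $W$, equipped with the metric induced by the supremum norm $\|\cdot\|_{B^0[\varepsilon,\,\infty)}$, is a complete metric space.

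First I would recall that $B^0[\varepsilon,\,\infty)$, the space of bounded continuous functions on $[\varepsilon,\,\infty)$ under the supremum norm, is a Banach space and hence complete. Since a closed subset of a complete metric space is itself complete, it suffices to show that $W$ is closed in $B^0[\varepsilon,\,\infty)$. This is immediate: if $\psi_n \in W$ and $\psi_n \to \psi$ in norm, then $\psi_n \to \psi$ pointwise, so the constraint $\psi_n(x) \geq 0$ passes to the limit and gives $\psi(x) \geq 0$ at every $x \in [\varepsilon,\,\infty)$; thus $\psi \in W$, and $W$ is closed, therefore complete.

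With completeness of $W$ and the contraction estimate $\|B\psi - B\varphi\|_{B^0[\varepsilon,\,\infty)} \leq \lambda \|\psi-\varphi\|_{B^0[\varepsilon,\,\infty)}$ (with $\lambda<1$) in hand, the Banach fixed-point theorem applies directly and yields a unique fixed point $\psi_0 \in W$; moreover, for any initial $\psi \in W$ the iterates $B^n\psi$ converge to $\psi_0$. I expect no genuine obstacle at this stage: all the analytic content—controlling the kernel integral through Lemma \ref{lm:xepsilon} and thereby pinning the contraction constant to $\lambda<1$—has already been carried out in the earlier lemmas, so the present step reduces to checking the abstract hypotheses and citing the theorem.
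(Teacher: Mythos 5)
Your proposal is correct and follows exactly the paper's route: the paper likewise obtains this lemma by citing the Banach fixed-point theorem, relying on the earlier lemmas that $B$ maps $W$ into itself and is contractive with constant $\lambda<1$. Your explicit verification that $W$ is closed in $B^0[\varepsilon,\,\infty)$ (hence complete) is a detail the paper leaves implicit, but it is the same argument.
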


Clearly, the element $0 \in W$ is a fixed point of the operator $B: W \to W$ (see \eqref{eq:operatorb}). Moreover, the operator $B: W \to W$ has a unique fixed point $\psi_0 \in W$ by the lemma just above. Therefore, the fixed point $\psi_0 \in W$ is nothing but the element $0 \in W$. We thus have the following.

\begin{lemma}
The nonlinear integral operator $B: W \to W$ has a unique fixed point $0 \in W$. Consequently, each quark remains massless, and hence the massless abelian gluon model realizes the chiral symmetry.
\end{lemma}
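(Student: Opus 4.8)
The plan is to combine the uniqueness of the fixed point already secured through the Banach fixed-point theorem with the elementary observation that the zero function is itself a fixed point of $B$. In effect, once a contraction on a complete space is known to have a unique fixed point, pinning it down amounts to exhibiting \emph{any} fixed point, and here the obvious candidate $0$ does the job.

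First I would verify directly from \eqref{eq:operatorb} that $0 \in W$ is a fixed point of $B$. Setting $\psi \equiv 0$, the factor $\psi(y)$ in the numerator of the integrand vanishes identically, so the entire integral is $0$ and hence $B0(x)=0$ at every $x \in [\varepsilon,\,\infty)$. Thus $B0=0$. Since the constant function $0$ is bounded, continuous and nonnegative, it belongs to $W$, so it is a genuine fixed point of $B: W \to W$.

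Second, I would invoke the immediately preceding lemma, which asserts that $B: W \to W$ has a unique fixed point $\psi_0 \in W$ (this is the output of the contractivity estimate $\| B\psi-B\varphi \|_{B^0[\varepsilon,\, \infty)} \leq \lambda \| \psi-\varphi \|_{B^0[\varepsilon,\, \infty)}$ with $0<\lambda<1$, together with completeness of $B^0[\varepsilon,\, \infty)$, via the Banach fixed-point theorem). By uniqueness, the fixed point $\psi_0$ must coincide with the fixed point $0$ exhibited in the first step, that is, $\psi_0=0$. Hence $0$ is the unique fixed point of $B$ in $W$. I would then translate this back to the physical statement: recalling the substitution $u(x)=\psi(x)/\sqrt{x+\varepsilon}$ that produced \eqref{eq:mnequationprime}, the vanishing of $\psi_0$ forces the mass function $u$ to be identically zero, so each quark remains massless and the model realizes the chiral symmetry when $0<\lambda<1$.

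I do not expect any serious obstacle at this stage. The only nontrivial ingredient, the contractivity of $B$ that makes the Banach theorem applicable, has already been established in the earlier lemmas; what remains is the routine verification that $0$ is a fixed point and a one-line appeal to uniqueness. The argument is therefore essentially immediate, and the real work of the section lay entirely in the estimates leading to contractivity.
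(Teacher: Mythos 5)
Your proposal is correct and follows essentially the same argument as the paper: observe that $0$ is trivially a fixed point of $B$ (since $\psi \equiv 0$ annihilates the integrand), invoke the uniqueness of the fixed point from the preceding lemma (Banach fixed-point theorem), and conclude that the unique fixed point is $0$. The paper's own proof is exactly this two-step identification, so there is nothing to add.
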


Our proof of Theorem \ref{thm:fxd-pt-B} is now complete.

\noindent \textbf{Acknowledgments}

S. Watanabe is supported in part by the JSPS Grant-in-Aid for Scientific Research (C) 24540112. S. Watanabe would like to express his sincere thanks to the reviewer for the valuable comments on the paper.



\begin{thebibliography}{99}

\bibitem{bls} V. Bach, E.~H. Lieb and J.~P. Solovej,
\textit{Generalized Hartree-Fock theory and the Hubbard model},
J. Stat. Phys. \textbf{76} (1994), 3--89.

\bibitem{bcs} J. Bardeen, L.~N. Cooper and J.~R. Schrieffer,
\textit{Theory of superconductivity},
Phys. Rev. \textbf{108} (1957), 1175--1204.

\bibitem{billardfano} P. Billard and G. Fano,
\textit{An existence proof for the gap equation in the superconductivity theory}, Comm. Math. Phys. \textbf{10} (1968), 274--279.

\bibitem{bogoliubov} N.~N. Bogoliubov,
\textit{A new method in the theory of superconductivity I},
Soviet Phys. JETP \textbf{34} (1958), 41--46.

\bibitem{cornwallnorton} J.~M. Cornwall and R.~E. Norton,
\textit{Spontaneous symmetry breaking without scalar mesons},
Phys. Rev. \textbf{D 8} (1973), 3338--3346.

\bibitem{fhns} R.~L. Frank, C. Hainzl, S. Naboko and R. Seiringer,
\textit{The critical temperature for the BCS equation at weak coupling},
J. Geom. Anal. \textbf{17} (2007), 559--568.

\bibitem{fukuda-kugo} R. Fukuda and T. Kugo,
\textit{Schwinger-Dyson equation for massless vector theory and
the absence of a fermion pole},
Nucl. Phys. \textbf{B 117} (1976), 250--264.

\bibitem{hhss} C. Hainzl, E. Hamza, R. Seiringer and J.~P. Solovej,
\textit{The BCS functional for general pair interactions},
Comm. Math. Phys. \textbf{281} (2008), 349--367.

\bibitem{haizlseiringer} C. Hainzl and R. Seiringer,
\textit{Critical temperature and energy gap for the BCS equation},
Phys. Rev. \textbf{B 77} (2008), 184517.

\bibitem{higashijima} K. Higashijima,
\textit{Dynamical chiral-symmetry breaking},
Phys. Rev. \textbf{D 29} (1984), 1228--1232.

\bibitem{jackiwjohnson} R. Jackiw and K. Johnson,
\textit{Dynamical model of spontaneously broken gauge symmetries},
Phys. Rev. \textbf{D 8} (1973), 2386--2398.

\bibitem{kondo} K.-I. Kondo,
\textit{Spontaneous breakdown of chiral symmetry and scaling law in the massive vector model},
Phys. Lett. \textbf{B 226} (1989), 329--334.

\bibitem{kondo-two} K.-I. Kondo,
\textit{Infrared and ultraviolet asymptotic solutions to gluon and ghost propagators in Yang-Mills theory},
Phys. Lett. \textbf{B 551} (2003), 324--336.

\bibitem{kugo-nakajima} T. Kugo and H. Nakajima,
\textit{Schwinger-Dyson and Bethe-Salpeter approach to strong interaction dynamics and chiral symmetry breaking},
Prog. Theor. Phys. \textbf{122} (2009), 273--291.

\bibitem{maskawa-nakajima-one} T. Maskawa and H. Nakajima,
\textit{Spontaneous breaking of chiral symmetry in a vector-gluon model},
Prog. Theor. Phys. \textbf{52} (1974), 1326--1354.

\bibitem{maskawa-nakajima-two} T. Maskawa and H. Nakajima,
\textit{Spontaneous breaking of chiral symmetry in a vector-gluon model II},
Prog. Theor. Phys. \textbf{54} (1975), 860--877.

\bibitem{miransky-tanabashi-yamawaki} V.~A. Miransky, M. Tanabashi and K. Yamawaki,
\textit{Dynamical electroweak symmetry breaking with large anomalous
dimension and t quark condensate},
Phys. Lett. \textbf{B 221} (1989), 177--183.

\bibitem{odeh} F. Odeh,
\textit{An existence theorem for the BCS integral equation},
IBM J. Res. Develop. \textbf{8} (1964), 187--188.

\bibitem{vansevesant} A. Vansevenant,
\textit{The gap equation in the superconductivity theory},
Physica \textbf{17D} (1985), 339--344.

\bibitem{watanabe} S. Watanabe,
\textit{The solution to the BCS gap equation and the second-order phase transition in superconductivity},
J. Math. Anal. Appl. \textbf{383} (2011), 353--364.

\bibitem{watanabe-two} S. Watanabe,
\textit{Addendum to `The solution to the BCS gap equation and the second-order phase transition in superconductivity'},
J. Math. Anal. Appl. \textbf{405} (2013), 742--745.

\bibitem{yamawaki} K. Yamawaki,
\textit{Walking over the composites--In the spirit of Sakata--},
Prog. Theor. Phys. Suppl. \textbf{167} (2007), 127--143.

\bibitem{zeidler} E. Zeidler,
\textit{Applied Functional Analysis},
Applied Mathematical Sciences \textbf{108}, Springer -Verlag,
Berlin, Heidelberg and New York, 1995.

\end{thebibliography}
\end{document}